\newtheorem{theorem}{Theorem}[section]
\newtheorem{proposition}[theorem]{Proposition}
\newtheorem{lemma}[theorem]{Lemma}
\newtheorem{definition}[theorem]{Definition}
\newtheorem{example}[theorem]{Example}
\newtheorem{claim}[theorem]{Claim}
\newtheorem{conjecture}[theorem]{Conjecture}
\newtheorem{remark}[theorem]{Remark}
\newtheorem{problem}[theorem]{Problem}
\DeclareMathOperator{\sh}{Shift}
\title{Tur\'{a}n, involution and shifting}
\author{
Gil Kalai and Eran Nevo}
\address{Einstein Institute of Mathematics, The Hebrew University of Jerusalem, Jerusalem, Israel}
\email{kalai@math.huji.ac.il}
\address{Einstein Institute of Mathematics, The Hebrew University of Jerusalem, Jerusalem, Israel}
\email{nevo@math.huji.ac.il}
\thanks{Research of Kalai is partially supported by ERC advanced grant 320924, BSF grant 2006066, and NSF grant DMS-1300120,
and of Nevo by Israel Science Foundation
grant ISF-1695/15, by grant 2528/16 of the ISF-NRF Singapore joint research program, and by ISF-BSF joint grant 2016288.
}
\keywords{Tur\'{a}n's $(3,4)$-conjecture, shifting, threshold graphs}
\begin{document}
\maketitle
\section{Introduction}
Let $T(n)$ be a graph on the vertex set $[n]=\{1,2,\ldots,n\}$ which is the disjoint union of two cliques of
sizes $\lfloor\frac{n}{2}\rfloor$ and $\lceil\frac{n}{2}\rceil$.
Our starting point is the following influential Mantel-Tur\'{a}n theorem~\cite{Mantel, Turan}.

\begin{theorem}[Mantel, Tur\'{a}n]\label{thm:Turan}
Let $G$ be a graph on $[n]$ where every $3$-subset of $[n]$ contains an edge of $G$. Then the edge sets satisfy $$|E(G)|\geq |E(T(n))|.$$
\end{theorem}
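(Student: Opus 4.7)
The plan is to reduce the statement to the classical Mantel bound by passing to the complement. Let $H := \bar G$ be the complement graph on $[n]$. The hypothesis that every 3-subset of $[n]$ contains an edge of $G$ says precisely that no 3-subset is an independent set of $G$, i.e. no 3-subset is a clique of $H$; equivalently, $H$ is triangle-free. On the other hand, the complement of $T(n)$ is the complete balanced bipartite graph $K_{\lfloor n/2\rfloor,\lceil n/2\rceil}$, so
\begin{equation*}
|E(T(n))| \;=\; \binom{n}{2} - \lfloor n/2\rfloor\lceil n/2\rceil \;=\; \binom{n}{2} - \lfloor n^2/4\rfloor.
\end{equation*}
Since $|E(G)|+|E(H)|=\binom{n}{2}$, the desired inequality $|E(G)|\ge |E(T(n))|$ is equivalent to
\begin{equation*}
|E(H)| \;\le\; \lfloor n^2/4 \rfloor,
\end{equation*}
which is Mantel's theorem for the triangle-free graph $H$.

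To prove Mantel's bound I would use the short degree argument. For any edge $\{u,v\}\in E(H)$, triangle-freeness forces $N_H(u)\cap N_H(v)=\emptyset$, hence $\deg_H(u)+\deg_H(v)\le n$. Summing over edges,
\begin{equation*}
\sum_{x\in [n]} \deg_H(x)^2 \;=\; \sum_{\{u,v\}\in E(H)} \bigl(\deg_H(u)+\deg_H(v)\bigr) \;\le\; n\cdot |E(H)|.
\end{equation*}
Cauchy--Schwarz gives $\bigl(\sum_x \deg_H(x)\bigr)^2 \le n\sum_x \deg_H(x)^2$, i.e.\ $4|E(H)|^2 \le n^2 |E(H)|$, so $|E(H)|\le n^2/4$, and integrality yields the floor.

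The only real ``step'' is the first observation: recognizing that the covering hypothesis on $G$ is Mantel's triangle-free hypothesis on $\bar G$, and that the claimed extremal graph $T(n)$ is precisely the complement of the balanced complete bipartite graph which is Mantel-extremal. After that, the argument is a routine counting estimate with no genuine obstacle. Given the shifting theme of this paper, an alternative would be to apply symmetric shifting directly to $G$ as a 2-uniform family, verify that the cover-every-triple condition is preserved by shifting, and then check the inequality on the resulting shifted (threshold) graph; but this is longer and the complement route seems cleanest for the statement as given.
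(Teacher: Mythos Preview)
Your proof is correct: passing to the complement $H=\bar G$ turns the hypothesis into ``$H$ is triangle-free'' and the conclusion into Mantel's bound $|E(H)|\le \lfloor n^2/4\rfloor$, which you then establish by the standard degree-sum/Cauchy--Schwarz argument.

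The paper does not give a dedicated proof of Theorem~\ref{thm:Turan}; it is quoted as the classical Mantel--Tur\'an theorem and cited. That said, the paper recovers the bound twice as a corollary of its new results. First, Theorem~\ref{thm:domination_Turan} (proved via combinatorial and exterior shifting in Section~\ref{sec:algebraic}) shows that any such $G$ dominates $B(n)$, whence $|E(G)|\ge |B(n)|=|E(T(n))|$. Second, the Remark following Claim~\ref{claim:T-free} observes that the claim, combined with the trivial inequality $|E(X)|+|E(Y(X))|\le \binom{m}{2}$, yields $|E(X)|\le \lfloor m^2/4\rfloor$ for triangle-free $X$, which is exactly the complement form you use. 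Your route is the textbook one and is shorter and more self-contained; the paper's detours through shifting and through Claim~\ref{claim:T-free} are there to illustrate the machinery it develops, not to give the most economical proof of the classical statement. Your closing remark about a shifting proof is in fact what the paper carries out in Section~\ref{sec:algebraic}.
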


Let $B(n)$ be the set of edges on $[n]$
\[B(n)=\{ab: 1\leq a<b\leq n, a+b\leq n\}.\]
Note that $|B(n)|=|E(T(n))|$, and that $B(n)$ is a \emph{shifted} graph, a.k.a. threshold graph.
(By a slight abuse of notation we refer to $B(n)$ also as the graph with the vertex set $[n]$ and edge set $B(n)$.)
However, not every $3$-subset of $[n]$ contains an edge of $B(n)$.
We relate $B(n)$ to the Mantel-Tur\'{a}n theorem in two ways:
first, by weakening the condition in Theorem~\ref{thm:Turan} so that
its conclusion applies to $B(n)$ as well; second, by strengthening
the conclusion in Theorem~\ref{thm:Turan} to an algebraic one,
namely that any graph $G$ as in the theorem \emph{dominates} $B(n)$.
The later, as we shall see, serves as an inspiration for
an algebraic approach to Tur\'{a}n's $(3,4)$-problem.

\begin{theorem}[Tur\'{a}n with involution]\label{thm:pairs}
Let $G$ be a graph on $[n]$.
Assume there exists an involution $\tau$ on $[n]$
such that any $3$-set $S\subseteq [n]$ satisfies
\begin {equation}
\label {e:inv}
 |E(G[S])|+|E(G[\tau(S)])|\geq 2.
\end {equation}

Then $|E(G)|\geq |E(T(n))|$.
\end{theorem}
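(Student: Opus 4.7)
The plan is to reduce the problem to classical Mantel–Tur\'{a}n (Theorem~\ref{thm:Turan}) combined with a key lemma on triangle-free graphs. Set $F := \tau(G)$, so that $|E(G[\tau(S)])| = |E(F[S])|$ for every $S$, and the hypothesis becomes $|E(G[S])|+|E(F[S])| \geq 2$ for every 3-set $S$. Put $H := G \cup F$ and $G^* := G \cap F$. Inclusion–exclusion gives
\[
|E(H[S])| + |E(G^*[S])| \;=\; |E(G[S])| + |E(F[S])| \;\geq\; 2,
\]
while globally $|E(H)| + |E(G^*)| = |E(G)| + |E(F)| = 2|E(G)|$. Thus it suffices to prove $|E(H)| + |E(G^*)| \geq 2|E(T(n))|$.

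Two consequences of the local inequality are crucial. First, $|E(H[S])| \geq 1$ for every 3-set $S$, so Theorem~\ref{thm:Turan} applied to $H$ shows that the complement $\bar H := \binom{[n]}{2}\setminus E(H)$ is triangle-free. Second, whenever a 3-set $S$ has $|E(\bar H[S])| = 2$ (equivalently $|E(H[S])| = 1$), the unique $H$-edge of $S$ must lie in $G^*$. This says that every closing edge of a cherry in $\bar H$ belongs to $G^*$; writing $D_2(\bar H)$ for the set of such pairs (equivalently, the pairs at distance $2$ in $\bar H$), we obtain $|E(G^*)| \geq |D_2(\bar H)|$.

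The next step is a key lemma: \emph{for every triangle-free graph $K$ on $n$ vertices, $|E(K)| \leq |D_2(K)| + \lfloor n/2 \rfloor$.} Granting it with $K = \bar H$,
\[
2|E(G)| \;\geq\; \bigl(\tbinom{n}{2} - |E(\bar H)|\bigr) + \bigl(|E(\bar H)| - \lfloor n/2 \rfloor\bigr) \;=\; \tbinom{n}{2} - \lfloor n/2 \rfloor \;=\; 2|E(T(n))|,
\]
finishing the proof.

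To prove the key lemma I would fix a maximum matching $M \subseteq E(K)$ of size $\nu \leq \lfloor n/2 \rfloor$ and construct an injection $\phi\colon E(K)\setminus M \hookrightarrow D_2(K)$, which yields $|E(K)| \leq \nu + |D_2(K)|$. For $e = uv \in E(K)\setminus M$, the maximality of $M$ forces some endpoint to be matched; the candidate targets $M(u)v$ and $uM(v)$ both lie in $D_2(K)$ (common neighbors $u$ and $v$ respectively) and, by triangle-freeness of $K$, are non-edges of $K$. When exactly one endpoint of $e$ is matched the target is forced, and a short case check shows this image is disjoint from any target used by an edge with both endpoints matched. For those latter edges, each pair $\{x,y\}$ can arise as a candidate for at most two of them, namely $\{M(x),y\}$ and $\{x,M(y)\}$, so Hall's theorem supplies the required system of distinct representatives. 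The main obstacle is precisely this key lemma—verifying the Hall condition and the cross-case disjointness of images.
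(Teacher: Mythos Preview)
Your reduction is correct and considerably cleaner than the paper's argument. The paper proceeds by induction on $n$ together with a multi-step case analysis: first remove a pair $\{i,\tau(i)\}$ that is a non-edge; failing that, remove an induced $2$-matching of the form $\{i\tau(i),\,j\tau(j)\}$; failing that, introduce three auxiliary graphs $W,Z,R$ on the $\tau$-orbits according to the number of $G$-edges crossing between orbit pairs, and finish via the very same triangle-free lemma you isolate (their Claim~\ref{claim:T-free}, your ``key lemma'': $|E(X)|\le |D_2(X)|+\lfloor m/2\rfloor$). The fixed-point case requires a separate, parallel treatment. Your symmetrization $H=G\cup\tau(G)$, $G^*=G\cap\tau(G)$ replaces all of this structure-chasing with two lines of inclusion--exclusion, handles fixed points for free, and reduces directly to the key lemma applied to $K=\overline H$; the observation $D_2(\overline H)\subseteq E(G^*)$ is the crux and it is exactly right.

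On the key lemma itself: the paper proves it by a short induction (delete an edge $uw$, use $N(u)\cap N(w)=\emptyset$). Your matching argument is different but sound. Each Case~2 edge has exactly two candidate targets, and a given pair $\{x,y\}$ can be a candidate only for the edges $\{M(x),y\}$ and $\{x,M(y)\}$; hence the auxiliary bipartite graph is $2$-regular on the left with right-degrees $\le 2$, so it is a disjoint union of even cycles and paths with both endpoints on the right, and a left-saturating matching exists. Case~1 images always contain an unmatched vertex while every Case~2 candidate consists of two matched vertices, so the two image sets are disjoint. This gives the injection $E(K)\setminus M\hookrightarrow D_2(K)$ and hence $|E(K)|\le \nu+|D_2(K)|\le \lfloor n/2\rfloor+|D_2(K)|$.
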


Here $G[A]$ denotes the induced subgraph of $G$ on the subset $A$ of the vertex set.
First, note that the condition in Theorem~\ref{thm:Turan}
(``any triple supports an edge") implies
the condition in the above theorem (``any triple and its $\tau$-image
support two edges", counted with repetitions). In fact, if $<\tau>=\mathbb{Z}_2$ acts
trivially on $[n]$, then the conditions in both theorems coincide.

Second, if $\tau$ acts on $[n]$ with at most one fixed point, then by applying a suitable
permutation of the vertices we can assume $\tau(i)=n+1-i$ for any $i\in [n]$.
For this $\tau$, $(B(n),\tau)$ satisfies the assumptions of Theorem~\ref{thm:pairs}.

We now turn to an algebraic statement that
implies Theorem~\ref{thm:Turan}. First we define 
the notion of domination. 

Let $X=(x_{i,j})$ be an $n\times n$ matrix of variables,
and $C_k(X)=(c_{S,T})$ its $k$-th \emph{compound} matrix,
namely the $\binom{n}{k}\times \binom{n}{k}$ matrix, where for $k$-subsets $S,T$
of $[n]$, $c_{S,T}$ equals the determinant of the $(S,T)$-minor of $X$,
computed in the field extension $\mathbb{Q}(x_{i,j})$ over the rationales say.
Let $F_1$ and $F_2$ be two families of $k$-subsets of $[n]$.
Then $F_1$ \emph{dominates} $F_2$ if the submatrix of  $C_k(X)$ with rows indexed
by $F_2$ and columns indexed by $F_1$ has rank $|F_2|$.
This implies, of course, that $|F_2| \le |F_1|$.

\begin{theorem}[Tur\'{a}n with domination]\label{thm:domination_Turan}
Let $G$ be a graph on $[n]$ where every $3$-subset of $[n]$ contains
an edge of $G$. Then $E(G)$ dominates $B(n)$.
\end{theorem}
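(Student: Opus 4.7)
I propose to prove the theorem via symmetric algebraic shifting. For a graph $G$ on $[n]$, let $\sh(G)$ denote its symmetric algebraic shift: a shifted graph on $[n]$ with $|E(\sh(G))|=|E(G)|$ such that, by construction, $E(G)$ dominates $E(\sh(G))$. Since domination is monotone in its dominated family (restricting the row set of a full-row-rank submatrix of $C_2(X)$ preserves full row rank), it suffices to prove the containment $B(n)\subseteq E(\sh(G))$. This reduces the theorem to a combinatorial-algebraic question about the shape of the shift under the Mantel--Tur\'{a}n hypothesis.

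The principal algebraic input would be an exterior-algebra reformulation of the hypothesis. Writing $y_1,\dots,y_n$ for the rows of $X$ and $W_G=\mathrm{span}\{y_a\wedge y_b : \{a,b\}\in E(G)\}\subseteq\bigwedge^2 k^n$, the assumption ``every $3$-subset of $[n]$ contains an edge of $G$'' is equivalent to surjectivity of the wedge-multiplication map $W_G\otimes k^n\to\bigwedge^3 k^n$: for each $3$-set $S=\{a,b,c\}$ with edge $\{a,b\}\in E(G)$ one has $y_S=(y_a\wedge y_b)\wedge y_c\in W_G\wedge k^n$, and the $y_S$'s span $\bigwedge^3 k^n$ for generic $X$. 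The strategy is to combine this surjectivity with the lex-leading-monomial description of $\sh(G)$ to force $B(n)\subseteq E(\sh(G))$.

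\textbf{Main obstacle.} The crux is the containment $B(n)\subseteq E(\sh(G))$, and the difficulty is that the hypothesis is not preserved by $\sh$ in any naive way: for $G=T(4)$ one has $\sh(G)=B(4)$, yet the $3$-set $\{2,3,4\}$ contains no edge of $B(4)$. Hence shiftedness of $\sh(G)$ cannot be combined with a ``hypothesis preserved'' argument in the usual way. Instead, one must exploit the wedge-surjectivity directly: for each $T=\{a,b\}\in B(n)$, produce an explicit element of $W_G$ whose lex-leading $2$-subset is $T$, built by lifting a suitable $3$-set containing $T$ via the wedge map. Here the threshold nature of $B(n)$ (namely $\{a,b\}\in B(n)\Leftrightarrow a+b\leq n$) should be used to pair each $T$ with a ``small'' third vertex, so that the resulting leading monomials cover $B(n)$ without collision and without landing at strictly smaller $2$-subsets. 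Verifying this bookkeeping -- and in particular ruling out degenerate cases where the wedge-lifts are linearly dependent on lex-earlier ones -- is the technical heart of the proof.
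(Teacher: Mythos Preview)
Your reduction to the containment $B(n)\subseteq\sh(G)$ is correct and is exactly what the paper does (its Theorem~3.1). Two minor points first: you write ``symmetric algebraic shifting'' but then work throughout in the exterior algebra, so presumably you mean exterior shifting; and you repeatedly invoke ``lex-leading'' monomials, whereas $B(n)$ is not a lex-initial segment for $n\ge 5$. The cleanest route is to shift with respect to a term order $<_t$ for which $B(n)$ \emph{is} an initial segment (the paper takes any $<_t$ with $ab<_t a'b'$ whenever $a+b<a'+b'$), since then the containment and the domination statement become equivalent rather than the former being strictly stronger.

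The substantive gap is the passage from the surjectivity $W_G\wedge k^n=\bigwedge^3 k^n$ to the containment $B(n)\subseteq\sh(G)$. Your equivalence between the Tur\'an hypothesis and this surjectivity is correct and pleasant, but surjectivity in degree~$3$ does not in any evident way pin down leading terms in degree~$2$. Knowing $y_a\wedge y_b\wedge y_c\in W_G\wedge k^n$ only yields an expression $\sum_i w_i\wedge v_i$ with $w_i\in W_G$; it produces no single $w\in W_G$ with prescribed leading monomial $e_a\wedge e_b$, and ``pairing $T$ with a small third vertex'' supplies no mechanism for this either (note that any individual $y_p\wedge y_q\in W_G$ has leading term $e_1\wedge e_2$, not $e_p\wedge e_q$). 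You flag this bookkeeping as ``the technical heart'' but give no argument, and I do not see one along these lines. The paper proceeds entirely differently: an inductive combinatorial-shifting construction (label two non-adjacent vertices $n$ and $n-1$, observe that every other vertex is adjacent to at least one of them, and recurse on $[n-2]$) produces a combinatorial shift $G^c\supseteq B(n)$; then the Murai--Hibi comparison $G^{<_t}\preceq_t G^c$, together with $B(n)$ being a $<_t$-initial segment, forces $B(n)\subseteq G^{<_t}$.
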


We prove this statement via exterior algebraic shifting,
introduced by Kalai~\cite{Kalai_ext_shift},
and via relations between algebraic
and combinatorial shifting established by Hibi and Murai\cite{Murai-Hibi, Murai-Join}.
(We can apply algebraic shifting w.r.t.
any fixed term order $<_t$ that satisfies $ab<_t a'b'$
whenever $a+b<a'+b'$.)

We do not know if there is a common generalization of Theorems~\ref{thm:pairs}
and \ref{thm:domination_Turan}.

\begin{problem}
Let $G$ be a graph on $[n]$.
Assume there exists an involution $\tau$ on $[n]$
such that any $3$-set $S\subseteq [n]$ satisfies relation (\ref {e:inv}).
Must $E(G)$ dominate $B(n)$?
\end{problem}


\subsection {Tur\'{a}n's $(3,4)$-conjecture}

Theorem~\ref{thm:domination_Turan} is motivated by an algebraic
approach to a famous conjecture by Tur\'{a}n \cite{Turan, Turan-problems}:
Turan asked for the minimum number of edges in a 3-uniform hypergraph on $[n]$ such that every four
vertices span at least one edge. (Equivalently he asked for the
maximum number of edges if every four vertices span at most three edges.)
Tur\'an conjectured in 1940 that the minimum is attained by the following hypergraph,
now called the {\it Tur\'an's (3,4)-hypergraph}:

Divide $[n]$ to three equal-as-possible sets $A_1,A_2,A_3$ and consider all 3-sets which are
either contained in $A_i$ or contain two elements from $A_i$ and one element
from $A_{i+1 (\mod 3)}$, $i=1,2,3$. Thus Turan's conjecture is:

\begin{conjecture}[Tur\'{a}n's $(3,4)$-conjecture]\label{conj:(3,4)}
  Let $H$ be a collection of $3$-sets on $[n]$ such that any $4$-subset on $[n]$
supports a $3$-set in $H$. Then $|H|\geq h(n)$,
where
$$h(n):=\binom{s_1}{3}+\binom{s_2}{3}+\binom{s_3}{3}
  +s_1\binom{s_2}{2}+s_2\binom{s_3}{2}+s_3\binom{s_1}{2},~~~~ n=s_1+s_2+s_3, $$
and $|s_i-s_j|\leq 1$ for any $1\leq i,j\leq 3$.
\end{conjecture}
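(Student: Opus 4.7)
Conjecture~\ref{conj:(3,4)} has been open since 1940, so the proposal is to transplant the strategy of Theorem~\ref{thm:domination_Turan} to the 3-uniform setting rather than to claim a proof. The plan is to formulate and prove an algebraic strengthening: every 3-uniform hypergraph $H\subseteq \binom{[n]}{3}$ in which every 4-subset of $[n]$ contains a member of $H$ dominates a fixed shifted 3-uniform hypergraph $B_3(n)$ with $|B_3(n)|=h(n)$. The inequality $|H|\geq h(n)$ then follows immediately from the definition of domination.

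The first step is to identify $B_3(n)$. A natural candidate is a shifted family of size $h(n)$ that is as close as possible to an initial segment of $\binom{[n]}{3}$ under a term order refining $abc<_t a'b'c'$ whenever $a+b+c<a'+b'+c'$ (the 3-uniform analogue of the order used in Theorem~\ref{thm:domination_Turan}), with ties broken so as to preserve shiftedness. One must verify that such a $B_3(n)$ of the correct cardinality exists. The second step is to apply exterior algebraic shifting to $H$ with respect to this term order, producing a shifted family $\Delta(H)$ with $|\Delta(H)|=|H|$, and then invoke the Murai--Hibi bridge to combinatorial shifting to argue that $\Delta(H)$ inherits enough of the $(3,4)$-covering property of $H$ to force $B_3(n)\subseteq \Delta(H)$; domination of $B_3(n)$ by $H$ follows.

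The main obstacle is this last implication, and two difficulties conspire. First, it is not obvious that the $(3,4)$-covering property --- which is a statement about the complement of $H$ in $\binom{[n]}{3}$ --- is preserved, even weakly, under algebraic shifting; in the graph case the analogous preservation for the $(2,3)$-condition is already delicate, and it uses in essential ways that $k=2$. Second, and more fundamentally, the extremal Tur\'an $(3,4)$-hypergraph is not itself shifted, so proving $B_3(n)\subseteq \Delta(H)$ is strictly stronger than the conjecture: it asserts that the minimum of $|F|$ among shifted 3-uniform $F$ satisfying the shifted analogue of the $(3,4)$-covering condition is already $h(n)$. Pinning this down is the combinatorial heart of the problem in the shifted world and is where every previous attack has stalled.

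A complementary route, inspired by Theorem~\ref{thm:pairs}, is to replace the $\mathbb{Z}_2$-involution by a $\mathbb{Z}_3$-action reflecting the three-block structure of the Tur\'an example and to impose a covering condition averaged over each orbit of 4-sets. Such a symmetrized condition is compatible with the extremal hypergraph, but an analogue of the proof of Theorem~\ref{thm:pairs} linking it to a lower bound on $|H|$ is missing; producing such a mechanism, or combining it with the domination framework as suggested by the open problem following Theorem~\ref{thm:domination_Turan}, would be the key innovation.
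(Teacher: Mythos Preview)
The statement is a conjecture, not a theorem, and the paper does not prove it; there is no proof in the paper to compare your attempt against. Your submission is, appropriately, a research programme rather than a proof, and its outline largely coincides with what the paper itself proposes: Conjecture~\ref{conj:(3,4)domination} (equivalently Conjecture~\ref{conj:shifting_C(n)}) is precisely the assertion that every Tur\'an $(3,4)$-hypergraph dominates a fixed shifted family of size $h(n)$, and your plan to attack this via exterior shifting plus the Murai--Hibi comparison is the $3$-uniform analogue of the paper's proof of Theorem~\ref{thm:ShiftedTuran}.

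Where you diverge is in the choice of the target shifted family. You take $B_3(n)$ to be an initial segment under a term order refining $a+b+c$, whereas the paper uses $C(n)=\{abc: a+c\le n,\ 2a+b\le n\}$ together with a term order based on $(a+c,\,2a+b,\,a)$. These are genuinely different objects: already for $n=6$ one has $145\in C(6)$ (sum $10$) while $234\notin C(6)$ (sum $9$), so $C(n)$ is not an initial segment for any refinement of the $a+b+c$ order. The paper's choice is deliberate --- $C(n)$ satisfies $|\{F\in C(n):F\cap[r]\neq\emptyset\}|=r\binom{n-r-1}{2}$ for every $r$, which ties it to Conjecture~\ref{conj:shifting_lex}, and domination of $C(n)$ has been verified by computer for Kostochka's extremal families up to $n=18$. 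Your $B_3(n)$ carries no such evidence, and it is not even clear that Tur\'an's $(3,4)$-hypergraph itself dominates it.

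Your list of obstacles is accurate and marks exactly where the paper stops: the $3$-uniform analogue of Lemma~\ref{lem:CobmShiftContainsB} is missing and is not claimed. (A small correction: even for graphs the $(2,3)$-covering property is \emph{not} preserved under arbitrary combinatorial shifts --- witness $G=\{13,24\}$ on $[4]$ under $\sh_{14}$ --- so Lemma~\ref{lem:CobmShiftContainsB} succeeds via a tailored labeling and shift sequence, not via preservation.) The only unconditional progress the paper records toward Conjecture~\ref{conj:(3,4)} is the homological proof of the $r=1$ case of Conjecture~\ref{conj:shifting_lex}. Your $\mathbb{Z}_3$ suggestion aligns with the open problem on group actions at the end of Section~\ref{sec:pairs}, but neither you nor the paper carries it further.
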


For more background on this
problem see~\cite{34A-Keevash2011,34B-deCaen1994,34D-Kostochka1982,
34E-Brown1983,34F-Fon-Der-Flaass1988,34G-Cung-Lu1999,34H-Frohmader2008,
34I-Razborov2010,34J-Pihurko2011}.

Consider, now,  the collection of triples
\[C(n):=\{abc:\ a+c\leq n,\ 2a+b\leq n,\ 1\leq a<b<c\leq n\}.\]
Then $C(n)$ is a shifted family, and we expect it to play a
similar role for Turan's (3,4)-conjecture as the
role of $B(n)$ for the Mantel-Turan's theorem. Indeed, we note first
that
\begin{equation}
|C(n)|=h(n)=\max_s s \cdot {{n-s-1} \choose {2}} =
\lfloor\frac{n}{3}\rfloor\binom{n-\lfloor\frac{n}{3}\rfloor-1}{2}.
\end{equation}

The following conjecture implies Tur\'an's $(3,4)$-Conjecture.
It also strengthenes Kalai's conjecture from \cite[Eq.(3)]{Kalai:ResearchProblem}.

\begin{conjecture}\label{conj:(3,4)domination}
Let $H$ be a 3-uniform hypergraph on $[n]$ such that any four vertices span an edge.
Then $H$ dominates $C(n)$.
\end{conjecture}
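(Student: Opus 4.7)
The approach mirrors the proof of Theorem~\ref{thm:domination_Turan}, reducing the desired domination statement to a combinatorial claim about shifted families via exterior algebraic shifting. I would first apply the exterior algebraic shift $\Delta^e$ to $H$ with respect to a term order $<_t$ tailored to the definition of $C(n)$; for instance, any total order refining $abc <_t a'b'c'$ whenever $\max(a+c,\,2a+b) < \max(a'+c',\,2a'+b')$ is compatible with the shifting partial order and has the feature that $C(n)$ is an \emph{initial segment}, since $abc \in C(n)$ iff $\max(a+c,\,2a+b) \le n$. The property on which the argument rests (due to Kalai and Hibi--Murai) is the domination principle: $H$ dominates every shifted family $F$ contained in $\Delta^e(H)$. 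It therefore suffices to prove $C(n) \subseteq \Delta^e(H)$.

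Next, one must check that the Tur\'{a}n-type hypothesis ``every $4$-set contains an edge'' is inherited by $\Delta^e(H)$. Equivalently, the independence complex of $H$ has no face of size $4$, and this dimension bound is preserved under exterior algebraic shifting by the same exterior-algebra mechanism used in the graph case of Theorem~\ref{thm:domination_Turan}; the argument should transfer to the $3$-uniform setting with only cosmetic changes.

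The conjecture then reduces to the following combinatorial statement, and this is where the main obstacle lies: every shifted $3$-uniform family $F$ on $[n]$ in which every $4$-subset of $[n]$ contains an edge must satisfy $C(n) \subseteq F$. This is strictly stronger than Tur\'{a}n's $(3,4)$-conjecture restricted to shifted families, since it controls the actual structure of $F$ rather than only its size. A delicate feature is that $C(n)$ itself does \emph{not} satisfy the $4$-set covering property (for example, $\{2,3,4,5\} \subseteq [6]$ is uncovered by $C(6)$), so any qualifying $F$ is strictly larger than $C(n)$; what must be shown is that all the ``small'' triples comprising $C(n)$ are nonetheless forced into $F$. I would attempt this by induction on $n$, exploiting that the deletion $F|_{[n-1]}$ and the link $\link_F(n)$ are shifted on smaller ground sets and inherit appropriate Tur\'{a}n-type covering conditions (the deletion is still $4$-covering on $[n-1]$, and a ``triple in $T\cup\{n\}$'' analysis shows $\link_F(n)$ together with $F|_{[n-1]}$ covers $3$-sets of $[n-1]$), paired with a matching recursion expressing $C(n)$ in terms of $C(n-1)$ and a shifted graph on $[n-1]$ reminiscent of $B(n-1)$ — at which point the graph-level Theorem~\ref{thm:domination_Turan} can be brought to bear on the link. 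Combining these three steps yields $C(n) \subseteq \Delta^e(H)$, and the domination principle completes the proof.
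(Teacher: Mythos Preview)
The statement you are attempting to prove is Conjecture~\ref{conj:(3,4)domination}: the paper does not prove it, and only reports computer verification for small Kostochka examples. There is therefore no ``paper's own proof'' to compare against; the conjecture is open.

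Your proposed argument has a fatal gap at step~2. The Tur\'an property (``every $4$-set spans an edge'') is \emph{not} preserved by exterior algebraic shifting, and this is not a technicality but the crux of the difficulty. Here is why step~2 must fail. Suppose $F \subseteq \binom{[n]}{3}$ is shifted and Tur\'an. For any $1 \le a < b < c \le n-1$, the $4$-set $\{a,b,c,c+1\}$ contains an edge of $F$; every triple in this $4$-set lies above $abc$ in the partial order, so by shiftedness $abc \in F$. Hence $\binom{[n-1]}{3} \subseteq F$ and $|F| \ge \binom{n-1}{3}$. But Tur\'an's own hypergraph has only $h(n)$ edges, and $h(n) < \binom{n-1}{3}$ already for $n=6$ ($6$ versus $10$). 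Thus the exterior shift of Tur\'an's hypergraph cannot be Tur\'an, and step~2 is false. (This also shows your step~3 is trivially true --- shifted Tur\'an families contain far more than $C(n)$ --- but that is no help once step~2 collapses.) Your justification via the independence complex conflates two different operations: algebraic shifting is applied to $H$, not to its independence complex $I(H)$, and there is no mechanism forcing $I(\Delta^e(H))$ to inherit the dimension bound on $I(H)$.

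Your appeal to the graph case also misreads how Theorem~\ref{thm:domination_Turan} is actually proved. The paper never claims that $G^{<_t}$ inherits the Tur\'an property (the same counting argument shows it does not: a shifted Tur\'an graph would have to contain $\binom{[n-1]}{2}$). Instead, Lemma~\ref{lem:CobmShiftContainsB} chooses a \emph{specific} vertex labeling and a \emph{specific} sequence of combinatorial shifts under which one can track, step by step, enough edges to force $B(n) \subseteq G^c$; Lemma~\ref{lem:CobmShiftIsBigger} then transfers this containment to the algebraic shift via the initial-segment property of $B(n)$. Finding an analogous labeling-and-shift scheme that forces $C(n)$ into a combinatorial shift of an arbitrary Tur\'an $3$-uniform hypergraph is precisely the missing ingredient, and supplying it would resolve the conjecture.
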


Conjecture~\ref{conj:(3,4)domination} was verified, by computer, for all $H$ arising
from Kostochka's construction \cite{34D-Kostochka1982} with $n=3k\le 18$ vertices ($|H|=h(n)$).


Outline: Section~\ref{s:bacground}
contains background on dominance and shifting.
We prove Theorem~\ref{thm:pairs} in Section~\ref{sec:pairs}, and  Theorem~\ref{thm:domination_Turan} in Section~\ref{sec:algebraic}. In Section~\ref{s:34}
we discuss Tur\'an's $(3,4)$-conjecture.

\section*{Acknowledgements} We are grateful to Sonia Balagopalan for conducting computer experiments to test Conjecture~\ref{conj:(3,4)domination} and related conjectures.

\section{Dominance and  shifting}
\label{s:bacground}

\subsection{Dominance}

Let $X=(x_{ij})_{1 \le i, j \le n}$ be a matrix of $n^2$ variables.
Recall that the \emph{$k$-th compound matrix} $C_k(X)$ is the matrix of $k$ by $k$ minors, namely,
$$C_k(X)=(c_{S,T})_{S,T \in {{[n]} \choose {k}}},$$ where $$c_{ST}=det(x_{ij})_{i \in S,j\in T}.$$
(The order of rows and columns is not important in this subsection; it will be in the next).

\begin{definition}
Given two $k$-uniform hypergraphs on the vertex set $[n]$ $E_1$ and $E_2$,

(i) $E_1$ \emph{dominates} $E_2$ if the matrix $C_{E_1,E_2}(X)$, whose rows and columns correspond
to sets in $E_2$ and $E_1$ respectively,
has rank $|E_2|$.

(ii) $E_1$ and $E_2$ are \emph{weakly isomorphic} if each dominates the other.

\end{definition}

Of course, if $E_2$ dominates $E_1$ then  $|E_2| \ge |E_1|$, and if $E_1$ and $E_2$ are
weakly isomorphic then $|E_1|=|E_2|$.

One observes that if $E_1$ and $E_2$ are combinatorially isomorphic then they are weakly isomorphic.
This relies on the condition that the matrix $X$ is generic; compare with the Permutation Lemma of \cite{BK}.

\begin{example}
  A graph $G$ on $[n]$ dominates a star on $[n]$ iff $G$ is connected, and $G$ is dominated by a star on $[n]$ iff $G$ contains no cycle. In particular, $G$ is weakly isomorphic to a star on $[n]$ iff $G$ is a tree.
\end{example}
This example is the case $k=2$ of Proposition~\ref{prop:Tree_dom} below.

However, as we shall see, weak isomorphism is not a transitive relation.

\begin{example}\label{ex:not_transitive}

Let $C_5$ be a  cycle of length five. We will see later that $C_5$ is weakly isomorphic to
the graphs $G_1$ and $G_2$ which are themselves not weakly isomorphic, where
$G_1=\{ 12,13,14,15,23\}$ and $G_2=\{ 12,13,14,23,24\}$.

\end{example}

The following problem naturally arises:

\begin{problem}\label{prob:transitive}
What is the transitive closure of weak isomorphism for $k$-uniform hypergraph on $n$ vertices?
\end{problem}

\subsection{Connection with homology.}
Given a $k$-uniform hypergraph $G$ let $K(G)$ be the $(k-1)$-dimensional
simplicial complex whose $(k-1)$-faces are the edges in $G$, with complete $(k-2)$-dimensional skeleton.

 The $i$-th \emph{reduced} rational homology group of a simplicial complex $K$ is denoted by $\tilde{H}_{i}(K,\mathbb{Q})$.

A $k$-uniform hypergraph $G$ whose edges are all the $k$-subsets of $[n]$ containing some fixed element $v\in [n]$ is called a \emph{(spanning) $k$-star} on $[n]$; a $2$-star is simply a star, in the graph theoretical sense.
\begin{proposition}\label{prop:Tree_dom}

(i) A $k$-uniform hypergraph $G$ on $[n]$ dominates a (spanning) $k$-star on $[n]$ if and only if $\tilde{H}_{k-2}(K(G),\mathbb{Q}) = 0$.

(ii) A $k$-star on $[n]$ dominates a $k$-uniform hypergraph $G$ if and only if $\tilde{H}_{k-1}(G,\mathbb{Q})=0$.


\end{proposition}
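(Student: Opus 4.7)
Both parts concern the generic rank of the $\binom{n-1}{k-1}\times|G|$ submatrix $M$ of $C_k(X)$ whose rows are indexed by the $k$-star $\ast_v=\{S\in\binom{[n]}{k}:v\in S\}$ and whose columns are indexed by $G$. My plan is to factor $M$ through the simplicial boundary. Expanding $\det(X_{S'\cup v,T})$ along the row of $v$ yields, for $S'\in\binom{[n]\setminus v}{k-1}$,
\[
M_{S'\cup v,\,T}=\sum_{j\in T}\pm\,x_{v,j}\,\det\!\bigl(X_{S',\,T\setminus j}\bigr),
\]
which, after identifying $e_{S'\cup v}$ with $e_{S'}\in\Lambda^{k-1}\mathbb{Q}^{[n]\setminus v}$ (modulo signs), exhibits $M$ as the composition
\[
C_{k-1}(K(G))\xrightarrow{\;\partial_X\;}\Lambda^{k-1}\mathbb{Q}^n\xrightarrow{\;\Lambda^{k-1}X\;}\Lambda^{k-1}\mathbb{Q}^n\xrightarrow{\;\pi\;}\Lambda^{k-1}\mathbb{Q}^{[n]\setminus v},
\]
where $\partial_X(e_T):=\sum_{j\in T}\pm\,x_{v,j}\,e_{T\setminus j}$ is the boundary weighted by the $v$-th row of $X$, and $\pi$ kills basis vectors containing $v$.

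\textbf{Second step.} The weighted boundary $\partial_X$ is conjugate to the usual one: setting $D(e_T)=\bigl(\prod_{i\in T}x_{v,i}\bigr)e_T$ on $\Lambda^kV$ and $D'(e_R)=\bigl(\prod_{j\in R}x_{v,j}\bigr)e_R$ on $\Lambda^{k-1}V$, a direct check shows $D'\partial_X=\partial D$. Since $D,D'$ are invertible for generic $X$ and $D$ preserves $C_{k-1}(K(G))$,
\[
\dim\ker\bigl(\partial_X|_{C_{k-1}(K(G))}\bigr)=\dim\ker\bigl(\partial|_{C_{k-1}(K(G))}\bigr)=\dim\tilde{H}_{k-1}(K(G)),
\]
so $L:=\Lambda^{k-1}X\bigl(\operatorname{image}(\partial_X|_{C_{k-1}(K(G))})\bigr)$ has dimension $|G|-\dim\tilde{H}_{k-1}(K(G))$. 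With $\ker\pi$ of dimension $\binom{n-1}{k-2}$ inside $\Lambda^{k-1}\mathbb{Q}^n$ of dimension $\binom{n-1}{k-1}+\binom{n-1}{k-2}$, and $L$ varying through a large family as $X$ varies, one checks that for generic $X$ the subspaces $L$ and $\ker\pi$ sit in general position, so
\[
\operatorname{rank}(M)=\dim\pi(L)=\min\!\Bigl(|G|-\dim\tilde{H}_{k-1}(K(G)),\;\tbinom{n-1}{k-1}\Bigr).
\]

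\textbf{Conclusion.} For part (ii): $M$ has $|G|$ columns, and $\operatorname{rank}(M)=|G|$ is equivalent to $\dim\tilde{H}_{k-1}(K(G))=0$ once one notes that $\operatorname{image}(\partial|_{C_{k-1}(K(G))})\subseteq Z_{k-2}(\Delta_n^{(k-2)})$ of dimension $\binom{n-1}{k-1}$ forces $|G|\leq\binom{n-1}{k-1}$ in that case. For part (i): $\operatorname{rank}(M)=\binom{n-1}{k-1}$ iff $|G|-\dim\tilde{H}_{k-1}(K(G))\geq\binom{n-1}{k-1}$, iff the $(k-2)$-boundaries of $K(G)$ exhaust all $(k-2)$-cycles of $\Delta_n^{(k-2)}$, iff $\tilde{H}_{k-2}(K(G))=0$. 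The main obstacle is justifying the generic-position claim for $L$ versus $\ker\pi$: because $\Lambda^{k-1}X$ is only a special kind of automorphism of $\Lambda^{k-1}\mathbb{Q}^n$, one cannot appeal to genericity in the full linear group, and I expect to verify it by first specializing the $v$-th row to $x_{v,j}\equiv 1$ (so $\partial_X$ becomes $\partial$) and then varying the remaining entries of $X$ so that $\Lambda^{k-1}X$ places the fixed subspace $B_{k-2}(K(G))$ in general position with $\ker\pi$, and finally invoking semicontinuity of rank.
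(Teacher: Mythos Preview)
Your approach is quite different from the paper's, which bypasses direct linear algebra entirely: the paper invokes the known identity
\[
\dim \tilde{H}_i(\Delta;\mathbb{Q})=\bigl|\{F\in\Delta^{<_L}:\ |F|=i+1,\ \{1\}\cup F\notin\Delta^{<_L}\}\bigr|
\]
(Lemma~\ref{lem:Homology_Shifting}), notes that the $k$-star with apex $1$ is an initial segment of $\binom{[n]}{k}$ in the lexicographic order, and then uses the tautology that an initial segment $I$ dominates $G$ (resp.\ is dominated by $G$) iff $G^{<_L}\subseteq I$ (resp.\ $I\subseteq G^{<_L}$). Your direct factorisation through the boundary map is more elementary and self-contained; the paper's route is shorter but imports the machinery of exterior algebraic shifting.

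However, the ``main obstacle'' you flag is a phantom: the projection $\pi$ is redundant, so no generic-position argument is needed. Observe that the composite you wrote down is nothing but $\iota_{e_v^*}\circ\Lambda^kX$ restricted to $\mathbb{Q}^G$ (this is just the general identity $\iota_{\alpha}\circ\Lambda^kX=\Lambda^{k-1}X\circ\iota_{X^{\mathsf T}\alpha}$ applied to $\alpha=e_v^*$, which is exactly your Laplace expansion). But the image of $\iota_{e_v^*}$ on $\Lambda^k\mathbb{Q}^n$ is always contained in $\Lambda^{k-1}\mathbb{Q}^{[n]\setminus v}$, since contracting against $e_v^*$ removes every occurrence of $e_v$. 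Hence $\pi$ restricts to the identity on $\operatorname{image}(\Lambda^{k-1}X\circ\partial_X)$, and
\[
\operatorname{rank}(M)=\operatorname{rank}\bigl(\Lambda^{k-1}X\circ\partial_X|_{\mathbb{Q}^G}\bigr)=\operatorname{rank}\bigl(\partial_X|_{\mathbb{Q}^G}\bigr)=|G|-\dim\tilde{H}_{k-1}(K(G))=\dim B_{k-2}(K(G)),
\]
with no $\min$ and no transversality hypothesis. Part~(ii) is then immediate, and part~(i) follows because $B_{k-2}(K(G))\subseteq Z_{k-2}$ with $\dim Z_{k-2}=\binom{n-1}{k-1}$, so $\operatorname{rank}(M)=\binom{n-1}{k-1}$ iff $B_{k-2}(K(G))=Z_{k-2}$ iff $\tilde{H}_{k-2}(K(G))=0$. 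In short: drop $\pi$, and your argument is complete as stated in the ``Second step''.
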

For a proof one can either argue directly, or, as we shall do, use known properties of algebraic shifting, discussed next. The proof is postponed then to the next subsection.

\subsection{Algebraic shifting}\label{subsec:alg_shift}

\begin{definition}

(i) The {\it partial order} $<$ on $\mathcal{F}\subseteq {[n] \choose {k}}$ is defined as follows: If
$S=\{s_1,s_2,\dots,s_k\}$, $s_1<s_2<\cdots<s_k$ and $T=\{t_1,t_2,\dots,t_k\}$, $t_1<t_2<\cdots<t_k$, then
$S<T$ if $s_1\le t_1, s_2\le t_2 ,\dots ,s_k\le t_k$.

(ii)
A \emph{term order} $<_t$
 on ${[n] \choose k}$ is
a linear extension of the partial order $<$.
\end{definition}


\begin {definition}

A family $\mathcal{F}\subseteq {[n] \choose {k}}$
is \emph{shifted} if it is closed down under the partial order, namely, for any $F\in \mathcal{F}$ such
that $i<j\in F$ and $i\notin F$, the set $S=(F\setminus \{j\})\cup\{i\}$ is in $\mathcal{F}$.

\end {definition}

For example, $G_1$ and $G_2$ of Example~\ref{ex:not_transitive} are shifted. Shifted graphs
are called {\it threshold graphs}.

\begin {definition}
Let $A$ be an invertible $n\times n$ matrix over some
field, 
and let  $<_t$ be a term order on  ${{[n]} \choose {k}}$.

For $\mathcal{F} \subseteq {[n] \choose k}$, the \emph{$(<_t,A)$-exterior shifting} $\mathcal{F}^{<_t,A}$  of $\mathcal{F}$
is the $<_t$-smallest family $\mathcal{G}\subseteq {[n] \choose {k}}$ of size $|\mathcal{F}|$
such that $\mathcal{G}$ is weakly isomorphic to $\mathcal{F}$ w.r.t. $C_k(A)$.
(For two same size families of $k$-subsets of $[n]$, $Y$ and $Z$, say $Y$ is $<_t$-smaller than $Z$
iff the least element of the symmetric difference $Y\bigtriangleup Z$ w.r.t. $<_t$ belongs to $Y$.)
\end {definition}

Clearly, such $\mathcal{G}$ exists, as $\mathcal{F}$ is weakly isomorphic to itself.

Equivalently, start with $\mathcal{G}$ empty, and
add elements to $\mathcal{G}$ one-by-one ``greedily"
according to the order $<_t$ if the rank of the corresponding
matrix $C_{\mathcal{F},\mathcal{G}}(A)$ increases by appending the row.
For the matrix $A=X$ over the field $\mathbb{Q}(x_{i,j})$ (the degree $n^2$
transcendental extension of the field of rationales),
denote $\mathcal{F}^{<_t,X}$ by $\mathcal{F}^{<_t}$, called the exterior shifting of $\mathcal{F}$ w.r.t. $<_t$.

Kalai \cite{Kalai_ext_shift}
proved that $\mathcal{F}^{<_t}$ is shifted,
and showed that for $K$ a simplicial complex and $K_k$ its family of $k$-faces,
the union $K^{<_t,A}:=\cup_k (K_k)^{<_t,A}$ is a simplicial complex, provided that the order $S<_tT$ depends only on the symmetric difference $S \Delta T$.


Of special importance is exterior shifting with respect to the lexicographic order $<_L$, where
$S<_L T$ if $\min {S \Delta T} \in S$.
Another case, that was less studied, is exterior shifting w.r.t.
the reverse lexicographic order $<_{RL}$ defined by $S<_{RL}T$ if $\max {S \Delta T} \in T$.
(In \cite{Babson-Novik-Thomas:revlex} shifting w.r.t. $<_{RL}$ over the symmetric algebra, rather than exterior algebra, was studied.)
For the graph $C_5$ its exterior shifting w.r.t the lexicographic order gives $G_1$ and
its exterior shifting w.r.t the reverse lexicographic order gives $G_2$. It follows that $C_5$ is
weakly isomorphic to both $G_1$ and $G_2$. On the other hand Kalai \cite[Prop.4.2]{Kalai:SymmMatroids} proved that
shifted families are weakly isomorphic if and only if they are equal. This explains Example~\ref{ex:not_transitive}.


The following relation between exterior shifting w.r.t. $<_L$ and reduced homology \cite{Kalai_ext_shift, BK} is important.
\begin{lemma}\label{lem:Homology_Shifting}
For any simplicial complex $\Delta$,
$$\dim \tilde{H}_i(\Delta;\mathbb{Q})= |\{F\in \Delta^{<_L}: \ |F|=i+1,\ \{1\}\cup F \notin \Delta \}|.$$
\end{lemma}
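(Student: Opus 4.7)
The plan is to reduce the lemma to two ingredients from the theory of exterior algebraic shifting. The first is the invariance of reduced Betti numbers under lex exterior shifting:
\[
\dim\tilde H_i(\Delta;\mathbb{Q})=\dim\tilde H_i(\Delta^{<_L};\mathbb{Q})\qquad\text{for all }i.
\]
The second is a purely combinatorial formula: for any shifted simplicial complex $\Sigma$ on $[n]$, $\dim\tilde H_i(\Sigma;\mathbb{Q})$ equals the number of $(i+1)$-subsets $F\in\Sigma$ satisfying $1\notin F$ and $\{1\}\cup F\notin\Sigma$. Applying the first to $\Delta$ and the second to $\Sigma=\Delta^{<_L}$ yields the lemma as stated (reading the condition $\{1\}\cup F\notin\Delta$ on the RHS as the combinatorial criterion $\{1\}\cup F\notin\Delta^{<_L}$, which is the natural formulation for the shifted complex).

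For the first ingredient I would follow Kalai's original framework. View the simplicial chain complex of $\Delta$ inside the exterior algebra $\wedge V$ with $V=\mathbb{Q}(x_{ij})^n$, as the span of $\{e_F : F\in\Delta\}$, and recall that reduced homology is computed by multiplication by a generic linear form $\theta=\sum_i c_i e_i$. The generic change of basis $f_j=\sum_i x_{ij}e_i$ defining exterior shifting induces a graded $\mathbb{Q}(x_{ij})$-algebra automorphism of $\wedge V$ that commutes (up to a corresponding change of $\theta$) with the differential. Since ranks of boundary maps are basis-invariant, the Betti numbers are preserved; and by definition $\Delta^{<_L}$ is obtained by selecting the lex-initial monomial support of $\Delta$ in the new basis in each exterior degree, so the Betti numbers of $\Delta^{<_L}$ agree with those of $\Delta$.

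For the second ingredient I would proceed by induction on $n$ using the Mayer--Vietoris sequence for $\Sigma=\mathrm{star}_\Sigma(1)\cup\mathrm{del}_\Sigma(1)$, whose intersection is $\link_\Sigma(1)$. The closed star is a cone, hence acyclic, so the sequence reduces to
\[
\cdots\to\tilde H_i(\link_\Sigma(1))\to\tilde H_i(\mathrm{del}_\Sigma(1))\to\tilde H_i(\Sigma)\to\tilde H_{i-1}(\link_\Sigma(1))\to\cdots
\]
Both $\link_\Sigma(1)$ and $\mathrm{del}_\Sigma(1)$ are shifted complexes on $\{2,\dots,n\}$, so the inductive hypothesis gives a combinatorial count for each of their Betti numbers in terms of faces not extendable by $2$ inside the deletion and the link respectively. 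The expected main obstacle is showing that the induced map $\tilde H_\ast(\link_\Sigma(1))\to\tilde H_\ast(\mathrm{del}_\Sigma(1))$ is injective, so that the long exact sequence splits into short exact ones and dimensions add. I would handle this at the chain level by a down-shifting argument: any cycle in the link that bounds in the deletion can be modified, using shift moves that swap a vertex $j>1$ for $1$ inside $\Sigma$, to already bound in the link. Once injectivity is in place, a direct face-count identifies the surviving generators in $\tilde H_i(\Sigma)$ with exactly the faces $F\in\Sigma$ of size $i+1$ with $1\notin F$ and $\{1\}\cup F\notin\Sigma$, completing the formula and hence the lemma.
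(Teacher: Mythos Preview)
The paper does not supply a proof of this lemma; it simply quotes the result and cites Kalai's original paper on exterior shifting together with Bj\"orner--Kalai. So there is no ``paper's own proof'' to compare against beyond those references.

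Your two-step plan --- (i) Betti numbers are preserved under lex exterior shifting, (ii) for a shifted complex the Betti numbers are counted by the faces $F$ with $1\notin F$ and $\{1\}\cup F\notin\Sigma$ --- is exactly the decomposition used in those references, and you correctly flag that the condition on the right-hand side should read $\{1\}\cup F\notin\Delta^{<_L}$ (the paper itself uses it this way in the proof of the $r=1$ case in Section~\ref{s:34}).

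One substantive comment on your sketch of ingredient (i): the sentence ``since ranks of boundary maps are basis-invariant, the Betti numbers are preserved; and by definition $\Delta^{<_L}$ is obtained by selecting the lex-initial monomial support \dots\ so the Betti numbers of $\Delta^{<_L}$ agree with those of $\Delta$'' skips the actual content. A change of basis alone preserves Betti numbers trivially, but $\Delta^{<_L}$ is not obtained from $\Delta$ by a change of basis --- it is obtained by passing to a lex-initial family, and it is not automatic that this passage preserves the ranks of the boundary maps. The standard argument (Kalai) is that in the generic $f$-basis the differential is multiplication by $f_1$, so on monomials $f_S$ it sends $f_S\mapsto f_{\{1\}\cup S}$ (or to $0$ if $1\in S$); combined with the greedy definition of $\Delta^{<_L}$ with respect to $<_L$, this lets one read off kernels and images combinatorially and match them with those of $\Delta$. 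You should make this step explicit rather than appeal to basis-invariance.

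For ingredient (ii), your Mayer--Vietoris induction works, but the usual route in the literature is shorter: a shifted complex is a near-cone with apex $1$, and a direct collapsing (or discrete Morse) argument shows it is homotopy equivalent to a wedge of spheres indexed precisely by the faces $F$ with $1\notin F$ and $\{1\}\cup F\notin\Sigma$. This bypasses the injectivity issue you anticipate.
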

\begin{proof}[Proof of Proposition~\ref{prop:Tree_dom}.]
From Lemma~\ref{lem:Homology_Shifting} if follows that a $k$-uniform hypergraph $G$ on $[n]$ has
$\tilde{H}_{k-2}(K(G),\mathbb{Q}) = 0$
 iff $G^{<_L}$ contains the $k$-star on $[n]$ with apex $1$, denoted by $S$,
 and $\tilde{H}_{k-1}(K(G),\mathbb{Q}) = 0$
 iff $G^{<_L} \subseteq S$.
 As domination is preserved under a permutation of $[n]$, we can assume the $k$-star in the proposition to be $S$. Both parts (i) and (ii) then follow, as an \emph{initial} segment $I \subseteq {[n] \choose k}$ w.r.t. $<_L$ dominates (resp. is dominated by) $G$ iff $G^{<_L} \subseteq I$ (resp. $I\subseteq G^{<_L}$).
\end{proof}

\subsection {Combinatorial shifting}

\begin {definition}
The \emph{combinatorial shifting} $\mathcal{F}^c$ of $\mathcal{F}$
refers to any family $\mathcal{G}$ of $k$-sets
that can be obtained from $\mathcal{F}$ by the following procedure:

(i) Pick some $1\le i<j\le n$. For all $F\in \mathcal{F}$ with $j\in F$, $i\notin F$ and $S=(F\setminus j) \cup i$ not in $\mathcal{F}$, replace $F$ by $S$, and leave the other elements of $\mathcal{F}$ intact.
Denote this operation by $\sh_{ij}$.

(ii) Repeat (i) until a shifted family is achieved.

\end {definition}

Combinatorial shifting was introduced in seminal papers by Erdos, Ko and Rado \cite{EKR} and Kleitman
\cite {Kleitman:comb_shift}, see also \cite{Frankl:comb_shift}.
As before, for a simplicial complex $K$, repeating
this procedure for all faces of $K$ (rather than of $\mathcal{F}$),
the union $K^c:= \cup_k(K_k)^c$ is a simplicial complex, with same face numbers as $K$.

\section{Tur\'{a}n with shifting}\label{sec:algebraic}
Let $<_t$ be a term order that satisfies $ab<_t a'b'$ whenever $a+b<a'+b'$.
Note that $B(n)$ is an initial segment w.r.t. $<_t$.
Recall we denote by $G^{<_t}$ the exterior shifting of $G$ w.r.t. $<_t$, and by $G^c$ a combinatorial shifting of $G$.

By the definition of exterior shifting (via a greedy choice of
a new basis w.r.t. $<_t$), and as the algebraic shifting w.r.t. any term order does not
change under a permutation of $[n]$, see \cite[Permutation Lemma]{BK}\footnote{The proof of Bj\"{o}rner and Kalai refers
to the lex order, however it extends to any term order with no difficulty.},
Theorem~\ref{thm:domination_Turan} is thus equivalent to the following statement.

\begin{theorem}[Tur\'an with shifting]\label{thm:ShiftedTuran}
Let $G$ be a graph on $[n]$ where every $3$-subset of $[n]$ contains an edge of $G$. Then $G^{<_t}$ contains $B(n)$.
\end{theorem}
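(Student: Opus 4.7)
The plan is to reduce the statement to an analogous containment for combinatorial shifting, using a Murai--Hibi style comparison between algebraic and combinatorial shifting.

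By the Mantel-Tur\'{a}n Theorem~\ref{thm:Turan}, $|E(G)| \geq |E(T(n))| = |B(n)|$, hence $|G^{<_t}| \geq |B(n)|$. Since $B(n)$ is the $<_t$-initial segment of $\binom{[n]}{2}$ of cardinality $|B(n)|$ and $G^{<_t}$ is shifted, showing $G^{<_t} \supseteq B(n)$ is equivalent to showing that the first $|B(n)|$ pairs of $[n]$ in the order $<_t$ are all present in $G^{<_t}$.

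Now apply some sequence of elementary combinatorial shifts $\sh_{ij}$ to $G$ to obtain a shifted graph $G^c$ of the same cardinality. The Hibi-Murai comparison between exterior algebraic shifting and combinatorial shifting, applied with the term order $<_t$ (which is a linear extension of the componentwise partial order on $\binom{[n]}{2}$), yields the inequality $e_i \leq_t f_i$ for each $i$, where $G^{<_t} = \{e_1 <_t e_2 <_t \cdots\}$ and $G^c = \{f_1 <_t f_2 <_t \cdots\}$ are both enumerated in increasing $<_t$-order. A short symmetric-difference argument then shows: if $G^c$ contains an initial segment of $\binom{[n]}{2}$ with respect to $<_t$, then so does $G^{<_t}$. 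Hence it suffices to prove $G^c \supseteq B(n)$.

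To establish $G^c \supseteq B(n)$, I would argue by contradiction. Suppose $e = \{a,b\} \in B(n)$ is $<_t$-minimally absent from $G^c$. Shiftedness forces every edge $\{a',b'\}$ with $a' \geq a$ and $b' \geq b$ to be absent, so $G^c$ is supported on the ``low'' region $L = \{\{c,d\} : c < a \text{ or } d < b\}$. Combining the Mantel bound $|G^c| \geq |B(n)|$ with this restriction, and then tracing through the sequence of $\sh_{ij}$ operations, one aims to locate a triple of vertices in $[n]$ whose three pairs all fail to be edges of $G$---contradicting the Tur\'{a}n hypothesis on the original graph.

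The main obstacle is this last step. Combinatorial shifting does not preserve the Tur\'{a}n property (a single $\sh_{ij}$ can uncover a triple, as one checks on small examples), so the passage from ``$G^c$ misses some edge of $B(n)$'' to ``$G$ misses some triple'' cannot be done locally, move by move. A more global tracking is required, perhaps via some monotonicity invariant that is compatible with both the shift operations and the Tur\'an covering condition. An arguably cleaner alternative is to bypass combinatorial shifting altogether and prove the equivalent domination statement (Theorem~\ref{thm:domination_Turan}) directly by exhibiting, from the Tur\'{a}n covering, an explicit $|B(n)|$-subset $\sigma \subseteq E(G)$ such that $\det C_2(X)_{B(n),\sigma}$ is a nonzero polynomial in the entries of $X$; a Lindstr\"{o}m--Gessel--Viennot-type non-vanishing argument is natural in this setting.
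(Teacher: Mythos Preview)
Your overall architecture matches the paper's: reduce to a combinatorial-shifting statement via the Murai--Hibi comparison (this is exactly the paper's Lemma~\ref{lem:CobmShiftIsBigger}), then use that $B(n)$ is an initial segment in $<_t$ to transfer the containment from $G^c$ to $G^{<_t}$. That part is fine.

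The genuine gap is precisely where you flag it: showing $G^c\supseteq B(n)$. Your contradiction sketch---locating an uncovered triple in $G$ from a missing edge of $B(n)$ in $G^c$---cannot work as stated, because (as you note) combinatorial shifting destroys the Tur\'an property, and your argument treats $G^c$ as an \emph{arbitrary} combinatorial shifting of $G$ with the original labeling. Indeed it is false that every combinatorial shifting of every labeling of $G$ contains $B(n)$: take $G=K_m\sqcup K_m$ on $[2m]$ with the two cliques on $\{1,\dots,m\}$ and $\{m+1,\dots,2m\}$; this graph is already shifted under this labeling, and it does not contain $B(2m)$ (e.g.\ $\{1,m+1\}\in B(2m)\setminus G$). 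So no counting or local tracking will rescue this step for a generic $G^c$.

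What the paper does instead is construct one \emph{specific} labeling and one \emph{specific} sequence of shifts that work (Lemma~\ref{lem:CobmShiftContainsB}). The two ideas you are missing are: (i) exterior shifting is invariant under relabeling of $[n]$, so you may first permute the vertices at will without changing $G^{<_t}$; and (ii) a clean inductive choice of labels---if $G$ is not complete, place a nonadjacent pair at positions $n-1,n$. The Tur\'an hypothesis then forces every other vertex to be adjacent to at least one of $v_{n-1},v_n$, giving at least $n-2=|B(n)|-|B(n-2)|$ edges into $\{v_{n-1},v_n\}$; apply the induction hypothesis on $G[\{v_1,\dots,v_{n-2}\}]$ to get $B(n-2)$ inside, then sweep all edges incident to $v_{n-1},v_n$ leftwards to fill out $B(n)$. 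This replaces your unsuccessful contradiction step and completes the proof along the very line you set up.
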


The following relation between combinatorial and algebraic shifting essentially appears in the combination of Murai \cite{Murai-Join} and Murai-Hibi \cite{Murai-Hibi}.

For two simplicial complexes on $[n]$, $Z$ and $Y$, and a term order $\prec$, let $Z\prec Y$
denote that the minimal element in the symmetric difference $Z_k\triangle Y_k$ w.r.t. $\prec$ belongs to $Z$, for any $k$.
\begin{lemma}\label{lem:CobmShiftIsBigger}
For any simplicial complex $Y$, any term order $\prec$, and any combinatorial shifting,
$Y^{\prec} \prec Y^c$.
\end{lemma}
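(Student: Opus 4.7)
The plan is to realize any combinatorial shifting $Y^c$ as an algebraic shifting $Y^{\prec,A}$ with respect to some (non-generic) invertible matrix $A$, and then use upper semicontinuity of matrix rank to conclude $Y^{\prec}\prec Y^c$.

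First I would prove a general comparison: for any invertible $n\times n$ matrix $A$ over any field,
\[
  Y^{\prec}\preceq Y^{\prec,A}.
\]
Indeed, if a family $\mathcal{G}$ is weakly isomorphic to $Y_k$ via $C_k(A)$, i.e.\ the $(Y_k,\mathcal{G})$-submatrix of $C_k(A)$ has full rank $|Y_k|$, then the corresponding submatrix of the generic compound $C_k(X)$ also has full rank, since the rank of a matrix of polynomials is at least the rank at any specialization of the variables. Hence every family eligible for the greedy construction defining $Y^{\prec,A}$ is also eligible for $Y^{\prec}$, so the $\prec$-minimum over this larger pool of candidates is $\prec$-smaller or equal. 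This step works level-wise and hence for the whole simplicial complex.

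Next I would construct an $A$ realizing $Y^c$. Following Murai~\cite{Murai-Join} and Murai--Hibi~\cite{Murai-Hibi}, each elementary step $\sh_{ij}$ appearing in a combinatorial shifting sequence is realized by algebraic shifting with respect to an $(i,j)$-transvection $A_{ij}$ (essentially $I$ perturbed by a scaled elementary matrix in the $(i,j)$-entry). Taking $A$ to be the ordered product of the transvections corresponding to the sequence of $\sh$-operations that produces $Y^c$, and iterating the single-step statement, yields $Y^{\prec,A}=Y^c$. Combined with the previous inequality, this gives $Y^{\prec}\preceq Y^c$, the desired conclusion.

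The main obstacle is the single-step realization, namely verifying that the greedy procedure defining $Y^{\prec,A_{ij}}$ selects exactly $\sh_{ij}(Y)$. This amounts to checking that the compound matrix $C_k(A_{ij})$ acts on Pl\"ucker coordinates in a triangular fashion compatible with the partial order that replaces $j$ by the smaller $i$, and then using that the term order $\prec$ is a linear extension of the shifting partial order $<$ so that the greedy $\prec$-minimization produces exactly the combinatorially shifted family. Once this base case is secured (we draw on the cited works of Murai and Hibi--Murai precisely here), composing the transvections handles the full shifting sequence, and the simpliciality of $Y^c$ is automatic since both combinatorial and algebraic shifting act compatibly across levels of simplicial complexes.
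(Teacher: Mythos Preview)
Your two ingredients coincide with the paper's: the rank--semicontinuity comparison $Y^{\prec}\preceq Y^{\prec,A}$ for any invertible $A$, and the single-step realization $\sh_{ij}(Z)=Z^{\prec,\phi}$ for the transvection $\phi(e_j)=e_i+e_j$. The gap is in the iteration. You assert that taking $A$ to be the product of the transvections yields $Y^{\prec,A}=Y^c$, justified by ``iterating the single-step statement.'' But algebraic shifting does not compose under matrix multiplication: there is no reason to expect $(Z^{\prec,\phi_1})^{\prec,\phi_2}=Z^{\prec,\phi_2\phi_1}$, since the $(Z,\mathcal G)$-minor of $C_k(\phi_2\phi_1)=C_k(\phi_2)C_k(\phi_1)$ sums over \emph{all} intermediate $k$-sets, not just those in $Z^{\prec,\phi_1}$. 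Equivalently, weak isomorphism is not transitive (cf.\ Example~\ref{ex:not_transitive}), so you cannot conclude that $Y^c$ is weakly isomorphic to $Y$ via any single matrix, which is what your argument needs. Your step~1 yields $Y^\prec\preceq\sh_{ij}Y$ and, applied again, $(\sh_{ij}Y)^\prec\preceq\sh_{i'j'}(\sh_{ij}Y)$; but these two inequalities do not chain, because you have no control over $(\sh_{ij}Y)^\prec$ versus $Y^\prec$.

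The paper iterates differently: it proves the one-step comparison $Z^{\prec}\preceq(\sh_{ij}Z)^{\prec}$ between \emph{generic} shifts (statement $(\ast)$ in the proof), and then chains
\[
Y^{\prec}\preceq(\sh_{i_lj_l}Y)^{\prec}\preceq\cdots\preceq(Y^c)^{\prec}=Y^c,
\]
using at the last step that a shifted family is fixed by algebraic shifting. The step $(\ast)$ is exactly Murai's Proposition~2.4 (in its $m_S$-counting form) combined with the single-step realization $\sh_{ij}Z=Z^{\prec,\phi}$. Note that $(\ast)$ compares $Z^\prec$ with $(Z^{\prec,\phi})^{\prec}$, not with $Z^{\prec,\phi}$ itself; this extra generic shift on the right is precisely what makes the chain go through, and it does not follow from your semicontinuity argument alone.
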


\begin{proof}
  There is a composition $Y^c=\sh_{i_1 j_1}\circ\sh_{i_2 j_2}\circ\cdots \circ\sh_{i_l j_l} Y$, with $i_s<j_s$ for all $1\leq s\leq l$.

  We show that for any simplicial complex $Z$ on $[n]$, any $1\leq i<j\leq n$, and any term order $\prec$, the following holds:

  (*) $Z^{\prec} \prec (\sh_{ij}Z)^{\prec}$.

  Then, iterating (*) we obtain the desired:\\
  $Y^{\prec} \prec (\sh_{i_l j_l}Y)^{\prec} \prec (\sh_{i_{l-1} j_{l-1}}(\sh_{i_l j_l}Y))^{\prec} \prec \cdots \prec (Y^c)^{\prec}=Y^c$,
  where the last equality follows from the fact that $Z^{\prec}=Z$ for $Z$ shifted.

  Now, (*) follows from a result of Murai, rephrased here in terms of algebraic shifting, rather than of GINs:
  for $S\subseteq [n]$, $K\subseteq 2^{[n]}$ and term order $<$, denote $m^<_S(K):=|\{T\in K:\ |T|=|S|, \ T\leq S\}|$.

  \begin{lemma}(\cite[Prop.2.4]{Murai-Join})
  For any simplicial complex $K$, any term order $<'$, any invertible $n\times n$ matrix $\phi$ and any $S\subseteq [n]$,
  $$m^{<'}_S(K^{<'}) \geq m^{<'}_S(K^{<',\phi})^{<'}).$$
  \end{lemma}
  Thus, all that is left to verify is that there exists $\phi$ s.t. $\sh_{ij}K=K^{\prec, \phi}$.
Let $\phi$ be the linear map defined by $\phi(e_k)=e_k$ for all $k\neq j$ and $\phi(e_j)=e_i+e_j$. One can check directly that this $\phi$ works, as mentioned in \cite[Lemma 1.10]{Aramova-Herzog-Hibi--Betti2000}, or apply \cite[Lemma 2.5(b)]{Murai-Hibi} to this $\phi$.
\end{proof}

\begin{lemma}\label{lem:CobmShiftContainsB}
For any graph $G$ on $n$ vertices where every $3$-subset of the vertices contains an edge of $G$, there exists a labeling of the vertex set by $[n]=\{1<2<\ldots<n\}$ and a combinatorial shifting such that $G^c$ contains $B(n)$.
\end{lemma}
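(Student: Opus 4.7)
The plan is to prove the statement by induction on $n$, with the base case $n \le 3$ immediate: $|B(n)| \le 1$, and since any graph satisfying the $3$-set property has at least one edge, combinatorial shifting readily produces a shifted graph containing the unique edge of $B(n)$.

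For the inductive step with $n \ge 4$, I would split by the structure of $G$. First, suppose $G$ has an isolated vertex $v$. The $3$-set property forces $V \setminus \{v\}$ to be a clique (else two non-neighbors of $v$, together with $v$, form an independent triple), so $G = K_{n-1} \sqcup \{v\}$; labeling $v$ as $n$ makes $G$ already shifted on $[n]$, and $B(n) \subseteq \binom{[n-1]}{2} \subseteq E(G)$. Second, suppose $G$ has no isolated vertex but is disconnected. Since $\alpha(G) \le 2$, $G$ has exactly two components, each of which must be a clique (else a non-edge inside a component, plus any vertex of the other component, gives an independent triple). Writing $G = K_a \sqcup K_b$ and labeling the cliques by consecutive blocks $\{1,\dots,a\}$ and $\{a+1,\dots,n\}$, a direct (and routine) computation confirms that applying the shifts $\sh_{i,j}$ in lexicographic order yields exactly $B(n)$, generalizing the worked examples of the introduction.

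The main case is when $G$ is connected. Here I would pick a vertex $v$ of minimum degree and apply the inductive hypothesis to $G - v$, obtaining a labeling $\phi \colon V \setminus \{v\} \to [n-1]$ and a shift sequence $\tau$ with $(G-v)^{\phi,\tau} \supseteq B(n-1)$. Extending $\phi$ by $\phi(v) = n$ and applying $\tau$ to $G$ viewed on $[n]$, the restriction to $[n-1]$ still contains $B(n-1)$, while the edges incident to $n$ are rearranged into a shifted subset $\{\{1,n\},\dots,\{d,n\}\}$ with $d = \deg_G(v)$. To produce the remaining edges $B(n) \setminus B(n-1) = \{\{i,n-i\} : 1 \le i \le \lfloor (n-1)/2 \rfloor\}$, I then apply additional shifts $\sh_{i,n}$ (in order $i = 1, 2, \ldots$) to absorb the incidences at $n$ into edges within $[n-1]$, followed by further lex-order shifts to close up the result to a shifted graph on $[n]$.

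The hardest part of the argument is the combinatorial bookkeeping in the connected case: one must verify that the additional shifts after $\tau$ both preserve the inclusion $E|_{[n-1]} \supseteq B(n-1)$ and create every diagonal edge $\{i,n-i\}$ of $B(n)$. Preservation follows from the monotonicity of combinatorial shifting (the same shift sequence applied to a supergraph yields a supergraph), while creation of the diagonals relies on the Mantel-Tur\'{a}n bound $|E(G)| \ge |B(n)|$ to guarantee enough edges are available, combined with a careful ordering of the shifts $\sh_{i,n}$ so that absorbed incidences land precisely at the diagonal positions of $B(n)$.
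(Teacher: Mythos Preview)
Your approach differs from the paper's in a crucial way, and the difference is exactly where your argument breaks down. The paper does \emph{not} remove a single vertex; it removes a \emph{non-adjacent pair} $v_{n-1},v_n$ (which exists unless $G$ is complete), so that the $3$-set property applied to each triple $\{v_i,v_{n-1},v_n\}$ forces at least one edge from $v_i$ to $\{v_{n-1},v_n\}$. This gives at least $n-2$ crossing edges, and $n-2=|B(n)|-|B(n-2)|$ on the nose. After the inductive shifting on $[n-2]$ one then applies $\sh_{1,n-1},\dots,\sh_{n-2,n-1},\sh_{1,n},\dots,\sh_{n-2,n}$, and the crossing edges are absorbed to produce the two missing ``diagonals'' $\{ab:a+b\in\{n-1,n\}\}$.

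Your single-vertex deletion does not give any such structural guarantee, and the gap is precisely the ``bookkeeping'' you flag as hardest. Two of your claims are not established (and the first is false as stated): (i) applying the inductive sequence $\tau$ (which consists of shifts among $[n-1]$) to the full graph need \emph{not} rearrange the edges at $n$ into an initial segment $\{\{1,n\},\dots,\{d,n\}\}$; (ii) even if it did, running $\sh_{1,n},\sh_{2,n},\dots$ need not land the absorbed edges on the required diagonals. For a concrete failure of your procedure, take $n=7$ with $E|_{[6]}=B(6)$ after $\tau$ and neighbours $N(7)=\{1,2,6\}$: your shifts yield the shifted graph $\{12,13,14,15,16,17,23,24,25\}$, which has $9=|B(7)|$ edges and contains $B(6)$ but misses $\{3,4\}\in B(7)$. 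You might hope that this configuration cannot arise when $v$ is chosen of minimum degree, but that is exactly what would need to be proved and nothing you have said addresses it. Invoking the Mantel--Tur\'an bound only gives the global edge count; a shifted graph with $\ge |B(n)|$ edges that contains $B(n-1)$ need not contain $B(n)$, so counting alone cannot close the argument. The paper's two-vertex step sidesteps all of this by tying the crossing edges directly to the $3$-set hypothesis.
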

\begin{proof}
  We argue by induction on $n=|V(G)|$; the cases $n=2,3$ clearly hold. Let $n\geq 4$. First we define a labeling of the vertices: if $G$ is not complete let $v_n$ and $v_{n-1}$ be nonadjacent, else label $v_1,\ldots,v_n$ arbitrarily (and the assertion is trivial). Repeat this rule for labeling the induced graph on $V(G)\setminus \{v_n,v_{n-1}\}$.

By induction, there is a combinatorial shifting
$(G[V\setminus\{v_n,v_{n-1}\}])^c$ that contains $B(n-2)$, and let $C$ be the result of applying this combinatorial shifting to $G$.

Note that any vertex $v_i$, $i<n-1$, is connected to at least one of $v_n$ and $v_{n-1}$ in $G$. Thus, also in $C$ there are at least $n-2=|B(n)|-|B(n-2)|$ edges between one of $v_n, v_{n-1}$ and the other $v_i$'s, $i<n-1$.
Hence,
$G^c=\sh_{n-2,n}(\sh_{n-3,n}(\cdots\sh_{1,n}(\sh_{n-2,n-1}(\ldots\sh_{1,n-1}(C))\cdots)$ contains $B(n)$.
\end{proof}
\begin{proof}[Proof of Theorem \ref{thm:ShiftedTuran}]
As exterior shifting is stable under permutation of the vertices, we can relabel the vertices as needed in Lemma~\ref{lem:CobmShiftContainsB} without effecting the resulted shifted graph $G^{<_t}$.
Lemmas \ref{lem:CobmShiftIsBigger} and \ref{lem:CobmShiftContainsB} imply that $G^{<_t}<_t G^c$ for some $G^c$ that contains $B(n)$. However, as $B(n)$ is an initial segment w.r.t. $<_t$, and $|E(G^c)|=|E(G)|=|E(G^{<_t})|$, also $G^{<_t}$ contains $B(n)$.
\end{proof}

\section{Tur\'{a}n with involution}\label{sec:pairs}
\begin{proof}[Proof of Theorem~\ref{thm:pairs}]
First we prove in detail the even case $n=2m$ where $\tau$ acts with no fixed points, by induction on $m$. Then we indicate the modification for all $n$ and all actions of $\tau$.

\textbf{Case: $\tau$ acts freely.}
As $\tau$ acts with no fixed points, w.l.o.g. assume $\tau(i)=n+1-i$ for all $i\in [n]$, and $n=2m$.
For the base case, one verifies the cases $n\leq 4$ by inspection.

Assume $m>2$.
\textbf{First}, assume there exists $i\in [n]$ such that the edge $\{i,\tau(i)\}\notin E(G)$. For any triple $T=\{i,\tau(i),j\}$, $T$ and $\tau(T)=\{i,\tau(i),\tau(j)\}$ support at least two edges of $G$
, thus there are at least $n-2$ edges between $V\setminus \{i,\tau(i)\}$ and $\{i,\tau(i)\}$. By induction,
$$|E(G)|\geq n-2+|E(G[V\setminus \{i,\tau(i)\}])|\geq n-2+|E(T(n-2))|=|E(T(n))|.$$

Thus, assume $\{i,\tau(i)\}\in E(G)$ for every $i\in [n]$.

\textbf{Second}, consider the case where $G$ contains an \emph{induced} matching with two edges $M=(V_M, E_M)$, $E_M=\{i\tau(i), j\tau(j)\}$.
For any vertex $k\notin V_M$ the two triples $ijk$ and $\tau(i)\tau(j)\tau(k)$ support at least 2 edges, and the
two triples $ij\tau(k), \tau(i)\tau(j)k$ support at least 2 edges, so there are at least 4 edges between $V_M$ and $\{k,\tau(k)\}$. All together, there are at least $2(n-4)$ edges in the cut $(V_M,V\setminus V_M)$.
By induction, and as the edges $i\tau(i)$ and $j\tau(j)$ exist,
$$|E(G)|\geq 2+2(n-4)+|E(G[V\setminus V_M])|\geq (n-2)+(n-4)+|E(T(n-4))|=|E(T(n))|.$$

Thus, assume further that

(*) for any $j\neq i,\tau(i)$, there is a crossing edge from $\{i,\tau(i)\}$ to $\{j,\tau(j)\}$.

\textbf{Third},
we define 3 auxiliary graphs, $W=([m],E_W)$, $Z=([m],E_Z)$ and $R=([m],E_R)$ as follows: $ij\in E_W / E_Z / E_R$ iff there exists, resp., exactly one/ exactly two/ at least $3$ edges in $G$ crossing from  $\{i,\tau(i)\}$ to $\{j,\tau(j)\}$.
Let $N_X(x)$ denote the set of neighbors of vertex $x$ in a graph $X$.
\begin{lemma}\label{lem:W}
  If $j,k\in N_W(i)$, $j\neq k$, then there exist at least 3 edges in $G$ crossing from  $\{k,\tau(k)\}$ to $\{j,\tau(j)\}$. In particular, $W$ is triangle free.
\end{lemma}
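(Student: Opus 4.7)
The plan is to apply the involution condition (\ref{e:inv}) to the $\tau$-stable collection of eight triples formed by choosing one vertex from each of the pairs $P_i:=\{i,\tau(i)\}$, $P_j:=\{j,\tau(j)\}$, $P_k:=\{k,\tau(k)\}$. These partition into four $\tau$-orbits $(S,\tau(S))$, producing four inequalities of the form $|E(G[S])|+|E(G[\tau(S)])|\geq 2$. The induced edges that can appear in any such $G[S]$ are exactly the crossing edges between two of the three pairs, so these four inequalities directly control the quantities $a$ (crossings between $P_i,P_j$), $b$ (between $P_i,P_k$), and $c$ (between $P_j,P_k$). By hypothesis $a=b=1$ and we want $c\ge 3$.

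I would reduce the casework using the obvious $\mathbb{Z}_2^3$-symmetry that independently swaps the two elements within each of $P_i,P_j,P_k$: up to this symmetry, the unique $P_i$--$P_j$ edge and the unique $P_i$--$P_k$ edge can interact in only two essentially different ways, namely (A) they share a common vertex inside $P_i$, so after relabeling the edges are $ij$ and $ik$; or (B) they use the two different endpoints of $P_i$, so after relabeling the edges are $ij$ and $\tau(i)k$.

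The main obstacle is that a crude sum of the four pair inequalities yields only $2(a+b+c)\ge 8$, i.e., $c\ge 2$; the work lies in extracting the last unit. The key observation is that in each of the cases (A) and (B) there is a single $\tau$-orbit of triples in which \emph{neither} of the $P_i$-crossing edges in question appears in $G[S]$ or $G[\tau(S)]$, so the bound $\ge 2$ must be met entirely by edges inside $P_j\times P_k$, forcing two such crossing edges at once. In case (A) this distinguished orbit is $(\{\tau(i),j,k\},\{i,\tau(j),\tau(k)\})$, forcing $jk,\tau(j)\tau(k)\in E(G)$; in case (B) it is $(\{i,\tau(j),k\},\{\tau(i),j,\tau(k)\})$, forcing $j\tau(k),\tau(j)k\in E(G)$. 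One further orbit, in which the known $P_i$-crossing edge contributes just one unit, then forces at least one more $P_j$--$P_k$ edge on the complementary diagonal of $P_j\times P_k$, bringing the total to $c\ge 3$.

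The triangle-free conclusion is then immediate: a triangle $ijk$ in $W$ would assert $jk\in E_W$, i.e., $c=1$, contradicting the bound $c\ge 3$ just established.
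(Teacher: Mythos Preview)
Your proposal is correct and follows essentially the same route as the paper: the paper likewise normalizes via swaps within the pairs to the two cases $ij,ik\in E(G)$ or $ij,\tau(i)\tau(k)\in E(G)$ (your (A) and (B) up to a $k\leftrightarrow\tau(k)$ swap), uses the $\tau$-orbit in which neither known crossing edge appears to force two $P_j$--$P_k$ edges, and then a second orbit in which exactly one known edge appears to force a third. Your framing in terms of the four $\tau$-orbits and the $\mathbb{Z}_2^3$ symmetry is a bit more explicit, but the content is identical.
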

\begin{proof}[Proof of the lemma]
By interchanging the labeling of $v$ and $\tau(v)$ if needed, we can assume that either (i) $ij,\tau(i)\tau(k)\in E_W$, or (ii) $ij,ik\in E_W$. In case (i): the two triples $i\tau(j)\tau(k)$ and $\tau(i)jk$ support at least 2 edges, so $jk,\tau(j)\tau(k)\in E(G)$. Likewise, the two triples $ij\tau(k)$ and $\tau(i)\tau(j)k$ support at least 2 edges, one of them is $ij$, so at least one of $j\tau(k),\tau(j)k$ is in $G$; in total we found $3$ crossing edges from $\{k,\tau(k)\}$ to $\{j,\tau(j)\}$.

In case (ii): similarly, considering the two triples $i\tau(j)\tau(k), \tau(i)jk$ implies $jk,\tau(j)\tau(k)\in E(G)$, and considering
the two triples $ij\tau(k), \tau(i)\tau(j)k$ implies that at least one of $j\tau(k),\tau(j)k$ is in $E(G)$; in total we found $3$ crossing edges from $\{k,\tau(k)\}$ to $\{j,\tau(j)\}$ in this case as well.
\end{proof}
By definition, clearly the edge sets $E_W, E_R, E_Z$ are pairwise disjoint, so $|E(G)|\geq m+|E(W)|+3|E(R)|+2|E(Z)|$.
By Lemma~\ref{lem:W}, the graph $R$ contains the union of cliques $\cup_{v\in [m]}K_{N_W(v)}$.
Thus, the following claim, of independent interest, finishes the proof of the even case $n=2m$ (details follow):

\begin{claim}\label{claim:T-free}
Let $X$ be a triangle free graph on $m$ vertices, and define the graph $Y=Y(X)=\cup_{v\in X}K_{N_X(v)}$. Then
$$|E(Y)|+\lfloor\frac{m}{2} \rfloor \geq |E(X)|.$$
\end{claim}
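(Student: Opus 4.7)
The plan is to produce, for any matching $M$ of $X$, an injection $\phi\colon E(X)\setminus M \hookrightarrow E(Y)$. Applied to any maximal matching $M$ (which satisfies $|M| \le \lfloor m/2\rfloor$), this gives
\begin{equation*}
|E(Y)| \ge |E(X)\setminus M| = |E(X)| - |M| \ge |E(X)| - \lfloor m/2\rfloor,
\end{equation*}
which is the claim. Triangle-freeness plays essentially no role in the construction itself; in the ambient setting it merely guarantees that $E(X)\cap E(Y)=\emptyset$, which is why the inequality is interesting.

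To define $\phi$, I would fix a maximal matching $M$ together with any linear order $\prec$ on its edges. For $f=uv\in E(X)\setminus M$, maximality of $M$ forces at least one of $u,v$ to be matched. If both are matched, say to $u'$ and $v'$, I relabel the endpoints of $f$ so that $\{u,u'\}\prec\{v,v'\}$ and set $\phi(f):=\{u,v'\}$; if only $u$ is matched, to $u'$, I set $\phi(f):=\{u',v\}$. In either case $\phi(f)\in E(Y)$: the vertex $v$ in the first case, or $u$ in the second, is a common $X$-neighbor of the two endpoints of $\phi(f)$, being adjacent to one of them through $f$ and to the other through $M$.

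The main step is injectivity. The matched/unmatched pattern of the endpoints of $\phi(f)$ determines which of the two rules produced $f$, and in the one-matched case it pins down the unmatched endpoint of $f$ outright. So the only nontrivial potential collision is between two both-matched edges $f=uv$ and $f'=u'v'$ of $E(X)\setminus M$. Such a pair would sit on a $4$-cycle on $\{u,v,u',v'\}$ whose remaining two sides are the matching edges $\{u,u'\}$ and $\{v,v'\}$; then $\phi(f)=\phi(f')$ would force the ordering rule to pick $\{u,u'\}\prec\{v,v'\}$ from $f$ and $\{v,v'\}\prec\{u,u'\}$ from $f'$, a contradiction. Hence no collision occurs.

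The hard part is precisely this $4$-cycle collision: triangle-freeness does not prevent $X$ from being rich in $4$-cycles---indeed the tight examples $K_{a,a}$ consist entirely of them---so the collision cannot be avoided structurally. The linear order on $M$ is the device that breaks the symmetry between the two diagonals $\{u,v'\}$ and $\{u',v\}$ of each such cycle, sending $f$ to one diagonal and $f'$ to the other; once this symmetry is broken the rest of the argument is bookkeeping, and the claim follows.
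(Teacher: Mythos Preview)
Your argument is correct. The injection $\phi\colon E(X)\setminus M \to E(Y)$ is well defined (in each case the image pair has a common $X$-neighbour, namely the endpoint of $f$ that was \emph{not} kept), and your case analysis for injectivity goes through: outputs of the two rules are distinguished by whether both endpoints are $M$-covered, the one-matched rule is trivially invertible, and for the both-matched rule the only alternative preimage of $\{u,v'\}$ is $\{u',v\}$, which your ordering convention rules out. Your remark that triangle-freeness is not actually used is also accurate; it only ensures $E(X)\cap E(Y)=\emptyset$, which the inequality does not need.

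The paper takes a different route: a short induction on $m$, deleting an edge $uw$ together with its endpoints and observing that every edge from $\{u,w\}$ to the rest contributes a $Y$-edge incident to $u$ or $w$, then applying the inductive hypothesis to $X-\{u,w\}$. Unwound, that induction is close in spirit to your construction---the successively deleted edges form a maximal matching and the ``new'' $Y$-edges at each step correspond to an injection---but the paper never makes the matching or the injection explicit, and it handles the potential double-counting across steps by the inductive bookkeeping rather than by your ordering trick on $M$. Your approach is more transparent about \emph{why} $\lfloor m/2\rfloor$ is the right correction term (it is just $|M|$) and makes the irrelevance of triangle-freeness evident; the paper's induction is shorter to write down but hides the combinatorial mechanism.
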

\begin{remark}
The Mantel-Tur\'{a}n theorem (phrased for the complementary graph w.r.t. Theorem~\ref{thm:Turan}) easily follows from this claim, combined with the trivial inequality $|E(X)|+|E(Y)|\leq \binom{m}{2}$. Indeed, we get $2|E(X)|-\lfloor\frac{m}{2} \rfloor \leq \binom{m}{2}$, equivalently $|E(X)|\leq \lfloor \frac{m^2}{4}\rfloor$.

While in the Mantel-Tur\'{a}n theorem equality holds only for the complete bipartite graph $K_{\lfloor \frac{m}{2}\rfloor,\lceil \frac{m}{2}\rceil}$, in Claim~\ref{claim:T-free} equality holds for more graphs, e.g. for perfect matchings.
\end{remark}
\begin{proof}[Proof of the Claim]
We give an easy proof by induction on $m$.
The assertion is clear for $E(X)=\emptyset$, and for the case $m\leq 2$; assume $m\geq 3$ and $uw\in E(X)$.
As $X$ is triangle free, $N_X(u)\cap N_X(w)=\emptyset$. By the definition of $Y$, for any $u \neq w'\in N_X(w)$, and for any $w \neq u'\in N_X(u)$, we have $uw',u'w\in E(Y)$.
Thus,
$$|E(Y)|\geq |E(Y(X-\{u,w\}))|+|N_{X-u}(w)|+|N_{X-w}(u)|$$
$$
\geq |E(X-\{u,w\})|-\lfloor\frac{m-2}{2}\rfloor + |E_X(\{u,w\},V_X-\{u,w\})| = |E(X)|-1 -(\lfloor\frac{m}{2}\rfloor -1),$$
as desired, where the second inequality is by the induction hypothesis, and the first "$-1$" stands for the edge $uw$.
$E_X(A,V-A)$ stands for edges of $X$ in the cut from $A$ to its complement.
\end{proof}
Back to the proof of the free action case of Theorem~\ref{thm:pairs}, we get
$$|E(G)|\geq m+|E(W)|+3|E(R)|+2|E(Z)|\geq m+2\binom{m}{2}-\lfloor\frac{m}{2} \rfloor > |E(T(2m))|
,$$
as desired, where we used Claim~\ref{claim:T-free} for the second inequality.

\textbf{General case}.
The proof is similar to the free action case; we indicate the differences, and keep the notations from the proof of the previous case.
W.l.o.g. $n=2m+l$, $\tau(i)=2m+1-i$ for $i\in [2m]$, and $\tau(z)=z$ for $2m+1\leq z\leq n$.

In the \textbf{first} step, note that for any $i\in [2m]$, $z\in [2m+1,n]$ and $T=\{i,\tau(i),z\}$,  $\tau(T)=T$, so $T$ must support an edge of $G$, and as in the free action case we conclude there are at least $n-2$ edges crossing from $\{i,\tau(i)\}$ to $V(G)-\{i,\tau(i)\}$. So assume all edges $i\tau(i)$ exist in $G$.
By the same reasoning, also assume that any two fixed points form an edge in $G$.

In the \textbf{second} step, the triples $T=ijz$ and $\tau(T)=\tau(i)\tau(j)z$, for $z$ a fixed point, support at least 2 edges of $G$, both contain the vertex $z$, and again we conclude there are at least $2(n-4)$ edges in $G$ crossing from $\{i,j,\tau(i),\tau(j)\}$ to the complementary set of vertices. So assume there is at least one crossing edge from $\{i,\tau(i)\}$ to $\{j,\tau(j)\}$.
By similar reasoning, we can assume there is at one edge crossing from $\{i,\tau(i)\}$ to any pair of fixed points $\{z,z'\}$.

In the \textbf{third} step, define the graphs $W, R, Z$ on the vertex set $[m]$ as before, and define bipartite graphs $W'',Z'', R''$ with edges crossing from $[m]$ to $[2m+1,n]$ as follows: for $z\in [2m+1,n]$ and $i\in [m]$
$iz\in E(W'') / E(Z'') / E(R'')$ iff, resp., in $G$ there are exactly $0/1/2$ crossing edges from $z$ to $\{i,\tau(i)\}$.
Let $W'=W\cup W''$, and similarly define the graphs $Z'$ and $R'$. For a graph $H$ denote $e(H):=|E(H)|$. Thus,
\[e(W')+e(R')+e(Z')+\binom{l}{2}=\binom{m+l}{2}.\]

In order to lower bound the number of edges in $G$,
we need the following analog of Lemma~\ref{lem:W}:

for any $z\in [2m+1,n]$ and $i,j\in [m]$, $i\neq j$, we have

(a) if $i,j\in N_{W'}(z)$ then there exist all $4$ crossing edges from $\{i,\tau(i)\}$ to $\{j,\tau(j)\}$ in $G$, and

(b) if $i,z\in N_{W'}(j)$ then there exist all $2$ crossing edges from $\{i,\tau(i)\}$ to $z$ in $G$.

The verification of (a) and (b) is similar to the proof of Lemma~\ref{lem:W}.

We estimate $|E(G)|$:
\begin{eqnarray*}
e(G)\geq (m+\binom{l}{2}) +
(3e(R)+2e(Z)+e(W)) +
(2e(R'')+e(Z'')) \geq\\
\geq m+\binom{l}{2} +
(e(W')-\lfloor\frac{m+l}{2}\rfloor) +
2e(R)+2e(Z)+e(W) + e(R'')+ e(Z'') =\\
m+\binom{m+l}{2}-\lfloor\frac{m+l}{2}\rfloor+e(R)+e(Z)+e(W) =\\
m-\lfloor\frac{m+l}{2}\rfloor+\binom{m+l}{2}+\binom{m}{2}
,
\end{eqnarray*}
where in the first inequality the first summand accounts for edges from the first step and the other summands are by definition of $W',R',Z'$; the second inequality follows from combining the second and third steps with Claim~\ref{claim:T-free}, yielding
$e(R)+e(R'')\geq e(W')-\lfloor\frac{m+l}{2}\rfloor$;
and the equalities are by the definition of $W',R',Z'$ (and of $W,R,Z$).

Now, one verifies by a direct computation that the RHS is indeed $>e(T(2m+l))$.
This completes the proof of Theorem~\ref{thm:pairs}.
\end{proof}
\begin{remark}
Unlike Theorem~\ref{thm:Turan}, where the extremal example is unique, in Theorem~\ref{thm:pairs}, for $\tau(i)=n+1-i$ say, there are multiple extremal examples. In fact, following the first and second steps in the proof gives a recursive way to construct all of them, as labeled graphs. It may be of interest to characterize / count these extremal examples  up to graph isomorphism.
\end{remark}

\subsection {A general problem}

Let $\Gamma$ be a finite group acting on
$[n]$, $G$ a graph on $[n]$, and consider the condition

(**) $|\Gamma|\leq \sum_{g\in \Gamma}e(G[g(T)])$ for all $T\subseteq [n]$ with $|T|=3$.

\begin{problem}
  For which group actions $\Gamma \curvearrowright [n]$  does condition (**) imply $e(G)\geq e(T(n))$?
\end{problem}

Clearly this is the case for the trivial action of $\Gamma$, where (**) is equivalent
to the condition of Mantel-Tur\'{a}n theorem. Theorem~\ref{thm:pairs} says this is the
case for any action of $\mathbb{Z}_2$. This is not the case for the symmetric
group acting on $[n]$ by permutations, where (**) is equivalent to the weaker
bound $e(G)\geq \frac{n(n-1)}{6}$.

We can ask similar questions for other Tur\'an graphs and hypergraphs problems.
Some of those may be interesting on their own right and may shed light on the original problem.

\section {Turan's (3,4)-problem
}
\label {s:34}

Call a $3$-uniform hypergraph on $[n]$ such that any $4$ vertices span an edge a \emph{Tur\'{a}n hypergraph on $[n]$}.
First we recall Conjectures~\ref{conj:(3,4)} and \ref{conj:(3,4)domination}:
\begin{conjecture}[Tur\'{a}n's $(3,4)$-conjecture]
Let $H$ be a \emph{Tur\'{a}n hypergraph on $[n]$}.
Then $|H|\ge h(n):=
\lfloor\frac{n}{3}\rfloor\binom{n-\lfloor\frac{n}{3}\rfloor-1}{2}
$.
\end{conjecture}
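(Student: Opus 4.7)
My plan is to attack this via the stronger domination statement, Conjecture~\ref{conj:(3,4)domination}, exactly following the strategy that succeeded for Mantel--Tur\'{a}n in Section~\ref{sec:algebraic}. Fix a term order $<_t$ on $\binom{[n]}{3}$ refining the partial order $<$ for which $C(n)$ is an initial segment; such an order exists because the defining inequalities $a+c\le n$ and $2a+b\le n$ are monotone in each coordinate. By the Permutation Lemma, proving that the exterior shifting $H^{<_t}$ contains $C(n)$ is equivalent to proving the domination statement, which in turn immediately implies the counting inequality $|H|\ge |C(n)|=h(n)$.

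Next, following Lemma~\ref{lem:CobmShiftIsBigger}, it would suffice to exhibit, for a suitable labeling of $[n]$, some combinatorial shifting $H^c$ containing $C(n)$. The natural induction on $n$ would proceed by locating two or three vertices whose removal leaves a Tur\'{a}n hypergraph on the remainder, applying induction to relabel and shift the remainder so that it contains $C(n-2)$ (or $C(n-3)$), and then using $\sh_{ij}$ operations with $i$ small and $j\in\{n-2,n-1,n\}$ to pull enough triples incident to the removed vertices down into the initial segment $C(n)\setminus C(n-k)$. In the pair-removal case one would look for a non-edge $\{v_{n-1},v_n\}$ of $H$ and then use the Tur\'{a}n $(3,4)$-condition to control the number of triples of $H$ meeting $\{n-1,n\}$.

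The main obstacle, and the reason this conjecture has stood open since 1940, lies in precisely this local step. In the graph analog Lemma~\ref{lem:CobmShiftContainsB}, a non-edge $\{v_{n-1},v_n\}$ immediately forces $n-2$ crossing edges, which is exactly $|B(n)|-|B(n-2)|$. For a $3$-uniform Tur\'{a}n hypergraph no analogous clean accounting is available: the target count $|C(n)|-|C(n-2)|$ is quadratic in $n$, whereas the four-vertex condition only guarantees one triple per $4$-set, and the abundance of known extremal hypergraphs (Kostochka's family and its generalizations) strongly suggests that any inductive peel-off must be extraordinarily delicate about which pair to remove, or else must abandon the pair-removal template altogether. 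A worthwhile intermediate target, parallel to Theorem~\ref{thm:pairs}, would be the $\Gamma=\mathbb{Z}_3$ instance of the general problem at the end of Section~\ref{sec:pairs}, with $\mathbb{Z}_3$ cycling the three parts $A_1,A_2,A_3$ of Tur\'{a}n's construction; establishing even that weaker statement would give strong evidence that the shifting-plus-induction template has a chance for the full conjecture.
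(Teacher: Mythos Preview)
The statement under review is a \emph{conjecture}, and the paper does not prove it; there is no proof in the paper to compare your proposal against. The paper restates Tur\'{a}n's $(3,4)$-conjecture in Section~\ref{s:34}, formulates the stronger domination and shifting versions (Conjectures~\ref{conj:(3,4)domination} and~\ref{conj:shifting_C(n)}), and then proves only the very special case $r=1$ of the still weaker Conjecture~\ref{conj:shifting_lex}.

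Your proposal is likewise not a proof, and you say so yourself: the plan is to mimic the graph argument of Section~\ref{sec:algebraic} (pass to Conjecture~\ref{conj:(3,4)domination}, reduce to a combinatorial-shifting statement via Lemma~\ref{lem:CobmShiftIsBigger}, and attempt an inductive peel-off), and you correctly locate the obstruction at the analog of Lemma~\ref{lem:CobmShiftContainsB}, where the local count $|C(n)|-|C(n-2)|$ is quadratic while the $(3,4)$-condition only yields one triple per $4$-set. This is precisely the program the paper sketches but does not complete. So there is no discrepancy to report: your outline matches the paper's own suggested approach, the gap you name is the genuine open problem, and neither you nor the paper closes it.
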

\begin{conjecture}
Any Tur\'{a}n hypergraph on $[n]$ dominates $C(n):=\{abc:\ a+c\leq n,\ 2a+b\leq n,\ 1\leq a<b<c\leq n\}$.
\end{conjecture}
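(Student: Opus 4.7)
The plan is to follow the strategy of Theorem~\ref{thm:ShiftedTuran} one dimension higher. Define the weight
\[ w(abc) := \max(a+c,\, 2a+b) \]
on triples $a<b<c$ in $[n]$. Since $a\le a'$, $b\le b'$, $c\le c'$ imply $a+c\le a'+c'$ and $2a+b\le 2a'+b'$, the function $w$ is monotone with respect to the componentwise partial order on ${[n] \choose 3}$, so any term order $<_t$ that refines the preorder by $w$ (with ties broken, say, lexicographically) is a genuine term order, and by construction $C(n)=\{T\in {[n] \choose 3}:w(T)\le n\}$ is an initial segment with respect to $<_t$.

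By the Permutation Lemma together with the equivalence, for an initial segment $I$ of $<_t$, between ``$H$ dominates $I$'' and ``$H^{<_t}\supseteq I$'', the conjecture is equivalent to: for every Tur\'{a}n hypergraph $H$ on $[n]$, $H^{<_t}\supseteq C(n)$. By Lemma~\ref{lem:CobmShiftIsBigger} this would in turn follow from the combinatorial analogue of Lemma~\ref{lem:CobmShiftContainsB}, namely that for some labeling of $V(H)$ by $[n]$ and some sequence of elementary shifts $\sh_{ij}$, the resulting $H^c$ contains $C(n)$.

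I would attempt the combinatorial shifting statement by induction on $n$, removing three vertices at a time to match the three-part structure of the Tur\'{a}n $(3,4)$-extremal example; the difference $h(n)-h(n-3)=6(k-1)^2$ for $n=3k$ has a clean closed form that suggests this is the right granularity. Concretely, choose a triple of vertices $\{v_{n-2},v_{n-1},v_n\}$ (for instance, the three vertices outside a $4$-set that supports only one triple of $H$, forced by the Tur\'{a}n condition to exist whenever $H$ is not near-complete), apply the inductive hypothesis to the induced Tur\'{a}n hypergraph on the remaining $n-3$ vertices (yielding $C(n-3)$ after shifting), and then use the Tur\'{a}n condition on $4$-sets meeting $\{v_{n-2},v_{n-1},v_n\}$ to push the required extra triples of $C(n)\setminus C(n-3)$ into place via further shifts $\sh_{ij}$ with $j\in\{n-2,n-1,n\}$.

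The main obstacle is this last step: producing the correct number of well-placed triples of $C(n)\setminus C(n-3)$ by controlled combinatorial shifts. In the graph setting the analogous step was trivial, since picking a non-adjacent pair $\{v_{n-1},v_n\}$ forced every other vertex to be adjacent to one of them, giving exactly the $n-2$ needed edges. In dimension three the Tur\'{a}n condition is weaker per quadruple (one triple out of four), and one must carry out a careful analysis of the link $\link_H(v_n)$ together with the triangles among its non-edges (each of which, by the Tur\'{a}n condition, is itself a triple of $H\setminus v_n$). Formulating and proving the correct hypergraph analogue of the graph lemma -- essentially a lower bound on the number of triples of $H$ incident to $\{v_{n-2},v_{n-1},v_n\}$ that can be shifted into the region $C(n)\setminus C(n-3)$ -- is where I expect essentially all the difficulty to reside. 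As a fallback, one can attempt to verify the conjecture algebraically on Kostochka-type extremal constructions, extending the finite computations already completed for $n\le 18$.
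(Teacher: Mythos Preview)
The statement you are attempting is an open conjecture; the paper offers no proof. What the paper does provide is computer verification for Kostochka's extremal constructions with $n=3k\le 18$, a reformulation in terms of $<_c$-shifting (Conjecture~\ref{conj:shifting_C(n)}), and a proof of the much weaker $r=1$ case of the related Conjecture~\ref{conj:shifting_lex}. So there is no ``paper's own proof'' to compare against.

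Your reduction is sound as far as it goes: the weight $w(abc)=\max(a+c,2a+b)$ is monotone for the componentwise order, $C(n)=\{T:w(T)\le n\}$ is an initial segment for any term order refining $w$, and the combination of Lemma~\ref{lem:CobmShiftIsBigger} with the initial-segment trick from the proof of Theorem~\ref{thm:ShiftedTuran} would indeed derive $C(n)\subseteq H^{<_t}$ from a hypergraph analogue of Lemma~\ref{lem:CobmShiftContainsB}. This is exactly the template the paper uses one dimension down.

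The genuine gap is the one you name: the hypergraph analogue of Lemma~\ref{lem:CobmShiftContainsB}. Observe that this ``missing lemma'' already implies Tur\'an's $(3,4)$-conjecture outright, since $H^c\supseteq C(n)$ gives $|H|=|H^c|\ge |C(n)|=h(n)$. Thus the step you flag as ``where essentially all the difficulty resides'' is at least as hard as Conjecture~\ref{conj:(3,4)} itself, a problem open since 1941. The graph-case argument succeeds because a non-edge $\{v_{n-1},v_n\}$ forces every other vertex to be adjacent to one of them; there is no comparably clean local consequence of the $(3,4)$-condition that pins down the required $h(n)-h(n-3)$ triples incident to a chosen triple of vertices, and your sketch of analyzing $\link_H(v_n)$ does not supply one. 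In short, the proposal is a correct reformulation but not a proof, and the residual problem is equivalent in strength to the conjecture you set out to establish.
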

This last conjecture can be rephrased in terms of algebraic shifting: let $<_c$ be any term order on $[n] \choose 3$ in which $C(n)$ forms an initial segment. (Such $<_c$ exists, e.g. $\{a<b<c\}<_c\{a'<b'<c'\}$ if and only if either (i) $a+c<a'+c'$, or (ii) $a+c=a'+c'$ and $2a+b<2a'+b'$, or (iii) $a+c=a'+c'$, $2a+b=2a'+b'$ and $a<a'$.)
\begin{conjecture}\label{conj:shifting_C(n)}
If $H$ is a Tur\'{a}n hypergraph on $[n]$ then $C(n)\subseteq H^{<_c}$.
\end{conjecture}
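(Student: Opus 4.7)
The plan is to follow the template of Theorem~\ref{thm:ShiftedTuran} and reduce the conjecture to a combinatorial-shifting analog. Lemma~\ref{lem:CobmShiftIsBigger} applies to every term order, so $H^{<_c}<_c H^c$ for every combinatorial shifting $H^c$. Since $C(n)$ is an initial segment of $<_c$ and $|H^c|=|H^{<_c}|=|H|$, it suffices to prove the following combinatorial analog: for every Tur\'an hypergraph $H$ on $n$ vertices there is a labeling of $V(H)$ by $[n]$ and a composition of shifts $\sh_{i,j}$ yielding $H^c\supseteq C(n)$.

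I would attack this combinatorial analog by induction on $n$, mirroring Lemma~\ref{lem:CobmShiftContainsB}. The natural peeling strategy is to choose a small ``deficient'' set $S\subseteq V(H)$, label its elements as the top of $[n]$, apply induction to the induced sub-hypergraph $H[V(H)\setminus S]$ (which remains Tur\'an on $n-|S|$ vertices), and then use additional shifts $\sh_{i,j}$ to route the cross-edges between $S$ and $V(H)\setminus S$ into the triples of $C(n)\setminus C(n-|S|)$. The most natural choice is $|S|=3$ with $S=\{u,v,w\}$ a non-edge of $H$ (if $H$ is complete the conclusion is immediate): then for every remaining vertex $x$ the 4-set $\{x,u,v,w\}$ must support an edge of $H$, which is necessarily a cross-triple into $S$, in direct analogy with the non-adjacent pair used in the graph case. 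The choice of $v_{n-2},v_{n-1},v_n$ inside $S$, and the order of the composed shifts, should be guided by the defining inequalities $a+c\le n$ and $2a+b\le n$ of $C(n)$.

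The main obstacle is quantitative and positional. In the graph case a single non-edge supplied exactly $|B(n)|-|B(n-2)|=n-2$ cross-edges, one per remaining vertex, which already matched the target. Here the target $|C(n)|-|C(n-3)|$ is of order $n^2$, while a single non-edge $S$ supplies only $n-3$ cross-triples via the Tur\'an condition; one must therefore harvest cross-triples from many different 4-sets through $S$ simultaneously, and---harder still---show that the shifts $\sh_{i,j}$ can route them into the precise positions of $C(n)\setminus C(n-3)$ dictated by the defining inequalities of $C(n)$. Since Conjecture~\ref{conj:(3,4)domination} implies Tur\'an's $(3,4)$-conjecture, no short proof can be expected; a reasonable first step is to verify the analog directly on Kostochka-type extremal families (extending the computer evidence beyond $n=18$), and only then attempt a full combinatorial-shifting argument.
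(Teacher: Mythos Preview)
The statement you are asked to prove is a \emph{conjecture}, not a theorem: the paper offers no proof of it, and indeed remarks that it is equivalent to Conjecture~\ref{conj:(3,4)domination}, which in turn implies Tur\'an's $(3,4)$-conjecture.  So there is no ``paper's own proof'' to compare against, and your proposal is, appropriately, a research outline rather than a proof.

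Your reduction step is correct and does mirror the paper's method for graphs: Lemma~\ref{lem:CobmShiftIsBigger} is stated for arbitrary term orders, $C(n)$ is an initial segment for $<_c$, and $|H^c|=|H^{<_c}|$, so $C(n)\subseteq H^c$ for some combinatorial shifting would indeed give $C(n)\subseteq H^{<_c}$.  Note, however, that this ``reduction'' does not make the problem easier: already the cardinality inequality $|H^c|=|H|\ge |C(n)|=h(n)$ is Tur\'an's $(3,4)$-conjecture itself, so the combinatorial-shifting analog you propose is at least as hard as the original open problem.

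The specific inductive scheme you sketch has the gap you yourself identify, and it is fatal for a direct analog of Lemma~\ref{lem:CobmShiftContainsB}.  Peeling a non-edge $S=\{u,v,w\}$ gives, via the Tur\'an condition on $4$-sets $\{x,u,v,w\}$, only $n-3$ guaranteed cross-triples, while $|C(n)|-|C(n-3)|$ is quadratic in $n$; moreover, shifts $\sh_{ij}$ cannot create edges, only relabel them, so the shortfall cannot be made up by the final shifting step.  Any successful induction would have to extract $\Theta(n^2)$ cross-triples from the Tur\'an condition applied to many $4$-sets through $S$ simultaneously and then argue about their \emph{positions} relative to the two inequalities $a+c\le n$, $2a+b\le n$ --- this is where the real content of the $(3,4)$-problem lies, and the paper does not attempt it either.
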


We now relate Tur\'{a}n's Conjecture~\ref{conj:(3,4)} to exterior shifting w.r.t. the lexicographic order $<_L$.
\begin{conjecture}\label{conj:shifting_lex}
If $H$ is a Tur\'{a}n hypergraph on $[n]$ then for any $r$, $|\{F\in H^{<_L}: \ F\cap [r]\neq \emptyset\}| \ge r {n-1-r \choose 2}$.
\end{conjecture}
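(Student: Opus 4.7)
The plan is to attack Conjecture~\ref{conj:shifting_lex} by following the template of Theorem~\ref{thm:ShiftedTuran}. The key observation is that $\mathcal{I}_r := \{F \in \binom{[n]}{3} : F \cap [r]\neq \emptyset\} = \{F : \min F \le r\}$ is an initial segment of $<_L$, and the initial $<_L$-segment $J_r$ of size $r\binom{n-r-1}{2}$ sits inside $\mathcal{I}_r$, since $|\mathcal{I}_r| = \sum_{i=1}^r \binom{n-i}{2} \ge r\binom{n-r}{2} > r\binom{n-r-1}{2} = |J_r|$. Consequently, proving the set containment $J_r \subseteq H^{<_L}$ for every Tur\'an hypergraph $H$ on $[n]$ would imply Conjecture~\ref{conj:shifting_lex}. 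At $r = \lfloor n/3 \rfloor$ the RHS reaches $h(n)$, so this program would in particular resolve Tur\'an's $(3,4)$-conjecture.

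To produce $J_r \subseteq H^{<_L}$ I would mirror the argument at the end of the proof of Theorem~\ref{thm:ShiftedTuran}: exhibit, for some relabeling of $[n]$, a combinatorial shifting $H^c$ of $H$ with $J_r \subseteq H^c$. By Lemma~\ref{lem:CobmShiftIsBigger} we have $H^{<_L} <_L H^c$, and since $J_r$ is an initial segment of $<_L$ with $|H^{<_L}| = |H| = |H^c|$, the same contradiction argument used for $B(n)$ in Theorem~\ref{thm:ShiftedTuran} forces $J_r \subseteq H^{<_L}$. Thus the conjecture is reduced to a purely combinatorial statement analogous to Lemma~\ref{lem:CobmShiftContainsB}.

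To prove this combinatorial statement I would induct on $n$. In the inductive step one should peel off a carefully chosen vertex (or pair of vertices) $v$ from $[n]$, apply the inductive hypothesis to the restriction $H|_{[n]\setminus\{v\}}$---which is itself a Tur\'an hypergraph, since every $4$-subset of $[n]\setminus\{v\}$ is a $4$-subset of $[n]$ and so supports a triple of $H$ not containing $v$---and then use the Tur\'an $4$-condition to produce enough triples containing $v$ that can be moved by the operations $\sh_{i,v}$ into the remaining portion of $J_r$. In the graph case of Lemma~\ref{lem:CobmShiftContainsB} the choice was a non-adjacent pair of vertices; here a natural candidate is a vertex whose link graph $L_v(H)$ is small or structurally sparse, plausibly imitating the link structure of a vertex in Kostochka's extremal example.

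The main obstacle is finding the right vertex and showing that the Tur\'an $4$-condition delivers enough shiftable triples. Because the Tur\'an hypothesis is on $4$-sets rather than $3$-sets, the ``one non-adjacency yields $n-2$ forced edges'' mechanism of Lemma~\ref{lem:CobmShiftContainsB} does not transfer directly; a pair-peeling device in the spirit of the involutive argument of Theorem~\ref{thm:pairs}, or a more global counting, seems to be needed. A natural intermediate target is the case $r=1$: by Lemma~\ref{lem:Homology_Shifting} it is equivalent to $\dim \tilde{H}_1(K(H);\mathbb{Q}) \le n-2$ for every Tur\'an hypergraph $H$, which should be attackable via a Mayer--Vietoris induction on the star/deletion decomposition at a vertex with few-component link. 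Since the full program at $r=\lfloor n/3 \rfloor$ resolves the $(3,4)$-conjecture, genuinely new combinatorial input beyond the current shifting toolkit is likely required to complete it.
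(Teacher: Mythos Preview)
Your outline is a plan of attack, not a proof, and you acknowledge as much; the paper likewise does not prove Conjecture~\ref{conj:shifting_lex} in general. It establishes only the case $r=1$, and by a route quite different from what you sketch: rather than combinatorial shifting or a Mayer--Vietoris induction, the paper reformulates $r=1$ via Lemma~\ref{lem:Homology_Shifting} as $\dim \tilde H_1(K(H);\mathbb{Q})\le n-2$ and then argues directly that any basis of $H_1$ chosen from the triangle cycles $c(ab)=1a+ab-1b$ has its index set $\{ab\}$ forming a forest on $\{2,\dots,n\}$; the Tur\'an $4$-condition is used locally on quadruples $1ijk$ and $ijkl$ to rule out short cycles and chords. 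So even the one case that is settled is handled by a bespoke homological argument, not by the shifting template you propose.

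There is also a structural overreach in your reduction. The conjecture asks only for the \emph{count} $|H^{<_L}\cap\mathcal I_r|\ge r\binom{n-r-1}{2}$, whereas your target $J_r\subseteq H^{<_L}$ demands that a \emph{specific} lex-initial block of that size lie in $H^{<_L}$. These are genuinely different: $|\mathcal I_r|>|J_r|$ for every $1\le r\le n-3$, so $H^{<_L}$ can meet $\mathcal I_r$ in $|J_r|$ sets without containing $J_r$. At $r=\lfloor n/3\rfloor$ your target would force $H^{<_L}=J_{\lfloor n/3\rfloor}$ for every extremal Tur\'an hypergraph, a prediction the paper neither makes nor supports (its Conjecture~\ref{conj:(3,4)domination} concerns the different shifted family $C(n)$; already for $n=6$ one has $C(6)\ne J_2$). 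If you wish to keep the combinatorial-shifting strategy, note that the $m_S$-inequalities underlying Lemma~\ref{lem:CobmShiftIsBigger} actually yield $|H^{<_L}\cap\mathcal I_r|\ge |H^c\cap\mathcal I_r|$ for any lex-initial set $\mathcal I_r$, so the honest intermediate goal is to produce some labeling and some $H^c$ with at least $r\binom{n-r-1}{2}$ triples meeting $[r]$ --- still open, but not the gratuitously stronger $J_r\subseteq H^c$.
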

This conjecture is equivalent
to Kalai's \cite[Eq.(3)]{Kalai:ResearchProblem}, where algebraic shifting is not mentioned.
It is implied by Conjecture~\ref{conj:(3,4)domination}, by combining the definition of exterior shifting w.r.t. $<_L$ and a direct computation that shows, for any $r\in [n]$,
\[|\{F\in C(n):\ F\cap [r]\neq \emptyset\}|=r{n-r-1 \choose 2}.
\]

Here is another equivalent formulation of conjecture~\ref{conj:shifting_lex}, in terms of the compound matrix.

Let $H(k,r,n)=\{F\in {[n] \choose k}:\ F\cap [r]\neq \emptyset\}$, and $H(r,n)=H(3,r,n)$ for short. For a $k$-uniform hypergraph $H$ on $[n]$ let $\rm{rank}_r(H)$ be the rank of the submatrix $C_{H,H(k,r,n)}$ of the compound matrix.
\begin{conjecture}\label{conj:compound}
If $H$ is a Tur\'{a}n hypergraph on $[n]$ then for any $r$, $\rm{rank}_r(H)\ge r {n-1-r \choose 2}$.
\end{conjecture}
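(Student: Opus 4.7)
My plan is to mirror the proof of Theorem~\ref{thm:ShiftedTuran} at the hypergraph level, aiming in fact for the stronger Conjecture~\ref{conj:shifting_C(n)} (equivalently Conjecture~\ref{conj:(3,4)domination}), which then yields Conjecture~\ref{conj:compound} via the reformulations already in the paper. First I would note that $H(r,n)=\{F\in\binom{[n]}{3}:\min F\leq r\}$ is an initial segment of $<_L$, so the greedy column-by-column description of exterior shifting identifies $\rm{rank}_r(H)=|H^{<_L}\cap H(r,n)|$; hence Conjecture~\ref{conj:compound} coincides with Conjecture~\ref{conj:shifting_lex}, and the latter is implied by $H^{<_c}\supseteq C(n)$ via the identity $|C(n)\cap H(r,n)|=r\binom{n-r-1}{2}$.

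The heart of the argument would be a hypergraph analog of Lemma~\ref{lem:CobmShiftContainsB}:

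\emph{Key lemma.} For every Tur\'an hypergraph $H$ on $[n]$ there exist a labeling of $V(H)$ by $[n]$ and a combinatorial shifting $H^c$ with $H^c\supseteq C(n)$.

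Granting the key lemma, Lemma~\ref{lem:CobmShiftIsBigger} applied with $\prec=<_c$ gives $H^{<_c}<_c H^c$ and $|H^{<_c}|=|H^c|$. Since $C(n)$ is a $<_c$-initial segment, the same initial-segment argument used to finish the proof of Theorem~\ref{thm:ShiftedTuran} yields $C(n)\subseteq H^{<_c}$, which is Conjecture~\ref{conj:(3,4)domination}, and this in turn implies Conjecture~\ref{conj:compound}.

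For the key lemma I would induct on $n$ in groups of three, peeling off vertices $v_{n-2},v_{n-1},v_n$ chosen to be as sparsely covered by $H$ as possible (ideally an $H$-independent triple with small common $H$-neighborhood), labeling them accordingly and applying the inductive hypothesis to $H[V\setminus\{v_{n-2},v_{n-1},v_n\}]$ to shift it into a family containing $C(n-3)$. The Tur\'an condition applied to 4-sets of the form $\{v_i,v_j,v_p,v_q\}$ with $v_i,v_j\in[n-3]$ and $v_p,v_q\in\{v_{n-2},v_{n-1},v_n\}$, and to $\{v_i,v_{n-2},v_{n-1},v_n\}$, forces enough edges meeting $\{v_{n-2},v_{n-1},v_n\}$, which a carefully ordered composition of operators $\sh_{i,n-k}$ pushes onto the positions of $C(n)\setminus C(n-3)$. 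Checking that the available edge counts match the target cardinality reduces to the identity $|C(n)|=\lfloor n/3\rfloor\binom{n-\lfloor n/3\rfloor-1}{2}$.

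The main obstacle is of course the key lemma itself: as it implies $|H|\geq|C(n)|=h(n)$, establishing it in full would resolve Tur\'an's $(3,4)$-conjecture. The delicate subcases are expected to arise when no $H$-independent triple exists at the top, and a finer structural analysis of the link of the top triple in $H$ would be required, in the spirit of the inductive argument of Theorem~\ref{thm:pairs}. The payoff of this framework is that it converts the algebraic rank statement into an explicit finite combinatorial problem about sequences of elementary shifts, which is amenable to computer verification (as done for Kostochka's constructions through $n=18$) and may yield to a structural attack in special classes.
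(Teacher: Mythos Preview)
Your proposal is not a proof, and you say so yourself: the ``key lemma'' on which everything hinges would, as you note, already imply Tur\'an's $(3,4)$-conjecture. So what you have written is a reduction of Conjecture~\ref{conj:compound} to a statement at least as hard as Conjecture~\ref{conj:(3,4)}, not a proof of it. The framework is coherent---the equivalences $\mathrm{rank}_r(H)=|H^{<_L}\cap H(r,n)|$ and the implication from $C(n)\subseteq H^{<_c}$ via Lemma~\ref{lem:CobmShiftIsBigger} are correctly identified and match the paper's own reasoning---but the inductive sketch for the key lemma has no content beyond wishful parallelism with Lemma~\ref{lem:CobmShiftContainsB}. In the graph case the crucial fact was that two nonadjacent vertices force $n-2$ crossing edges, a one-line consequence of the Tur\'an condition; there is no comparable one-line mechanism for triples, and indeed if there were, the $(3,4)$-conjecture would not still be open.

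You should also be aware that the paper does \emph{not} prove Conjecture~\ref{conj:compound}. It is stated as a conjecture, and the only case established is $r=1$ (equivalently, the $r=1$ case of Conjecture~\ref{conj:shifting_lex}). That proof proceeds by an entirely different route from yours: it translates the $r=1$ inequality into the homological bound $\dim \tilde H_1(K(H);\mathbb{Q})\le n-2$ via Lemma~\ref{lem:Homology_Shifting}, chooses a basis of $H_1$ by cycles $c(ab)=1a+ab-1b$, and shows the supporting graph on $\{2,\dots,n\}$ is a forest by a minimal-cycle argument exploiting the Tur\'an condition on $4$-sets. No shifting of $H$ is performed at all. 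So even for the one case the paper settles, the method is homological rather than combinatorial-shifting, and does not pass through anything like your key lemma.
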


The case $r=\lfloor\frac{n}{3}\rfloor$, where the right hand side of the inequality is maximized, implies the original Conjecture~\ref{conj:(3,4)} by Tur\'{a}n.

The case $r=1$ of Conjecture~\ref{conj:shifting_lex} holds, as mentioned in \cite{Kalai:ResearchProblem}. Here we provide a proof.

\begin{proof}[Proof for $r=1$]
Let $\Delta=K(H)$, namely the $2$-dimensional simplicial complex consisting of $H$ and the complete graph on $[n]$, and let $\partial_i: C_i(\Delta)\rightarrow C_{i-1}(\Delta)$ be the usual boundary map from $i$-chains to $(i-1)$-chains on $\Delta$ with $\mathbb{Q}$-coefficients.
By Lemma~\ref{lem:Homology_Shifting},
$\dim H_1(\Delta):=\dim (\frac{\ker \partial_1}{\partial_2(C_2(\Delta))}) = |\{ab\in \Delta^{<_L}: \ 1ab \notin \Delta^{<_L} \}|$.
As $\dim \ker \partial_1={n-1 \choose 2}$, we need to show
\[\dim H_1(\Delta)\leq {n-1 \choose 2}-{n-2 \choose 2}=n-2
.\]
As the $1$-cycles $\{c(ab):=1a+ab-1b:\ 1<a<b\le n\}$ form a basis of the space of $1$-cycles in $\Delta$, we can choose a subset $B$ of them that forms a basis of $H_1(\Delta; \mathbb{Q})$, and let $G$ be the graph on $\{2,3,\ldots,n\}$ spanned by the edges $ab$ such that $c(ab)\in B$.
In order to show that $|B|\le n-2$ we show that
\begin{claim}
  $G$ is a forest. In particular $G$ has at most $n-2$ edges.
\end{claim}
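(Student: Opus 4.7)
I would prove the claim by contradiction, using a shortest-cycle argument with a basis-exchange (``swap'') step.

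Among all bases $B$ of $H_{1}(\Delta;\mathbb{Q})$ consisting of elements $\{c(ab):1<a<b\le n\}$ whose associated graph $G$ on $\{2,\ldots,n\}$ has at least one cycle, choose one minimizing the length $k$ of a shortest cycle in $G$; call such a cycle $v_{1}v_{2}\cdots v_{k}v_{1}$. The key preliminary observation is that $\{1,v_{i},v_{i+1}\}\notin H$ for every cycle edge $v_{i}v_{i+1}$: otherwise $c(v_{i}v_{i+1})=\partial(\{1,v_{i},v_{i+1}\})\in B_{1}$ would vanish in $H_{1}$, contradicting its membership in the basis $B$.

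The base case $k=3$ is immediate: Tur\'an's hypothesis applied to $\{1,v_{1},v_{2},v_{3}\}$, together with the observation excluding the three triples containing $1$ and a cycle edge, forces $\{v_{1},v_{2},v_{3}\}\in H$. Then $\partial(v_{1}v_{2}v_{3})=c(v_{1}v_{2})+c(v_{2}v_{3})-c(v_{1}v_{3})$ lies in $B_{1}$, giving a nontrivial linear dependence among three basis elements of $B$---a contradiction.

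For $k\ge 4$, Tur\'an on $\{1,v_{1},v_{2},v_{3}\}$ leaves two possibilities. \emph{Case A:} $\{v_{1},v_{2},v_{3}\}\in H$. Then $c(v_{1}v_{3})=c(v_{1}v_{2})+c(v_{2}v_{3})$ in $H_{1}$, and since $v_{1}v_{3}\notin E(G)$ (else the triangle $v_{1}v_{2}v_{3}$ would be a shorter cycle), exchanging $c(v_{2}v_{3})$ for $c(v_{1}v_{3})$ in $B$ produces a basis whose graph contains the strictly shorter cycle $v_{1}v_{3}v_{4}\cdots v_{k}v_{1}$, contradicting minimality of $k$. \emph{Case B:} $\{1,v_{1},v_{3}\}\in H$, so $c(v_{1}v_{3})=0$ in $H_{1}$. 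Here I would apply Tur\'an to the 4-subset $\{v_{1},v_{2},v_{3},v_{4}\}$ of cycle vertices, which (using $\{v_{1},v_{2},v_{3}\}\notin H$, for otherwise Case A applies) forces one of $\{v_{1},v_{2},v_{4}\},\{v_{1},v_{3},v_{4}\},\{v_{2},v_{3},v_{4}\}$ into $H$; in each sub-case, a triangle relation combined with the link relation $c(v_{1}v_{3})=0$ either permits an exchange that strictly shortens the cycle in another basis, or, after one further application of Tur\'an to a 4-subset such as $\{1,v_{1},v_{2},v_{4}\}$ or $\{1,v_{2},v_{3},v_{4}\}$, produces a second vanishing $c(v_{i}v_{j})=0$ that collapses a triangle identity into a direct dependence among cycle-edge basis elements.

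The principal obstacle is organizing this sub-case analysis in Case B so that no ``stuck'' configuration remains---specifically, ruling out the scenario in which all consecutive length-two chords satisfy $\{1,v_{i},v_{i+2}\}\in H$ while no triangle $\{v_{i},v_{i+1},v_{i+2}\}$ lies in $H$. The resolution is that Tur\'an applied to a 4-subset of cycle vertices (not containing $1$) always forces one of the three non-cycle-edge triples into $H$, and the resulting triangle relation, combined with the accumulated link relations, produces either the exchange shortening the cycle (contradicting minimality of $k$) or an explicit linear dependence among elements of $B$ (contradicting independence). This completes the contradiction and establishes that $G$ is a forest.
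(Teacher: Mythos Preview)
Your approach is the paper's: minimize over all bases $B$ and over the length of a shortest cycle, then use the relations $c(ab)=0$ when $1ab\in H$ and $c(ab)+c(bc)=c(ac)$ when $abc\in H$, together with basis exchange, to reach a contradiction. The only point that needs tightening is the order of operations in your Case~B. You secure $c(v_1v_3)=0$ and then split on Tur\'an applied to $\{v_1,v_2,v_3,v_4\}$; but the sub-case $\{v_1,v_2,v_4\}\in H$ cannot be finished using $c(v_1v_3)=0$ alone, and your proposed remedy of a further Tur\'an step on $\{1,v_1,v_2,v_4\}$ gives nothing there, since that $4$-set already contains the triple $v_1v_2v_4$. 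The paper avoids this by first running the \emph{same} argument on the shifted window $\{1,v_2,v_3,v_4\}$: either $v_2v_3v_4\in H$ (which is just your Case~A swap one step over, shortening the cycle) or $c(v_2v_4)=0$. With \emph{both} chord relations $c(v_1v_3)=0$ and $c(v_2v_4)=0$ in hand---and with $v_1v_2v_3,\,v_2v_3v_4\notin H$ already excluded---Tur\'an on $\{v_1,v_2,v_3,v_4\}$ leaves only the two ``long-chord'' triples $v_1v_3v_4$ and $v_1v_2v_4$, and each collapses in one line to either a shorter cycle after an exchange $c(v_3v_4)\leftrightarrow c(v_1v_4)$ (resp.\ $c(v_1v_2)\leftrightarrow c(v_1v_4)$) or, when $k=4$, a direct dependence in $B$. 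Your other suggested auxiliary set $\{1,v_2,v_3,v_4\}$ is exactly the right one; applying it \emph{before} the split on $\{v_1,v_2,v_3,v_4\}$ is what makes the case analysis close.
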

Indeed, suppose by contradiction that for some basis $B$, $G$ contains a cycle, and let $C$ be such cycle with minimal number of vertices among all choices of $B$.
Note that for any $1<i<j<k\le n$,

(*) if  $ijk\in\Delta$ then $c(ij)+c(jk)-c(ik)=0$ in $H_1(\Delta; \mathbb{Q})$, and

(**) if $1ij \in \Delta$ then $c(ij)=0$ in $H_1(\Delta; \mathbb{Q})$.

Thus $C$ cannot have $3$ vertices $i,j,k$ as the subset $1ijk$ supports a triangle in $\Delta$.

Suppose $C$ has more than $3$ vertices, and that $i,j,k,l$ are consecutive vertices on $C$. Consider the subset $1ijk$. Then $1ij,1jk\notin \Delta$ by (**), and $ijk\notin \Delta$ by (*), as otherwise we can shorten $C$ by replacing the two edges $ij,jk$ by the chord $ik$, contradicting the minimality of $C$. Thus $1ik\in\Delta$. Similarly, considering the subset $1jkl$ gives $1jl\in\Delta$, so $c(ik)=0=c(jl)$ in $H_1(\Delta; \mathbb{Q})$.

Consider the subset $ijkl$, it spans a triangle in $\Delta$, and we show that any such triangle yields a contradiction.
If $ijk\in\Delta$ then, as $c(ik)=0$, $c(ij)$ and $c(jk)$ are homologues, a contradiction; similarly $jkl\in\Delta$ yields to contradiction.
If $ikl\in\Delta$ then, as $c(ik)=0$, $c(il)$ and $c(kl)$ are homologues, and thus replace $c(kl)$ by $c(il)$ in $B$ to get a new basis $B'$ with graph $G'$. If $C$ has only $4$ vertices then $B'$ is dependent, a contradiction, and if $C$ has more than $4$ vertices then $G'$ contains a shorter cycle obtained from $C$ by deleting the edges $ij,jk,kl$ and adding the chord $il$, contradicting the minimality of $C$.
Similarly, $ijl\in\Delta$ yields to contradiction.
\end{proof}

\bibliography{biblio1}
\bibliographystyle{plain}

\end{document}